\theoremstyle{plain}
\newtheorem{lem}{Lemma}[section]
\newtheorem{prop}[lem]{Proposition}
\newtheorem{intthm}{Theorem}
\theoremstyle{definition}
\newtheorem{defn}[lem]{Definition}
\newtheorem{ex}[lem]{Example}
\newtheorem{disc}[lem]{Remark}
\newtheorem{fact}[lem]{Fact}
\newtheorem{para}[lem]{}
\newtheorem*{convention*}{Convention}
\newcommand{\cat}[1]{\mathcal{#1}}
\newcommand{\catd}{\cat{D}}
\newcommand{\pd}{\operatorname{pd}}	
\newcommand{\gdim}{\mathrm{G}\text{-}\!\dim}
\newcommand{\id}{\operatorname{id}}	
\newcommand{\fd}{\operatorname{fd}}
\newcommand{\cmdim}{\mathrm{CM}\text{-}\!\dim}	
\newcommand{\cidim}{\mathrm{CI}\text{-}\!\dim}
\newcommand{\depth}{\operatorname{depth}}	
\newcommand{\rank}{\operatorname{rank}}	
\newcommand{\amp}{\operatorname{amp}}
\newcommand{\edim}{\operatorname{edim}}
\newcommand{\grade}{\operatorname{grade}}
\newcommand{\cmd}{\operatorname{cmd}}	
\newcommand{\qcmd}{\mathbf{q}\!\operatorname{cmd}}
\newcommand{\lotimes}{\otimes^{\mathbf{L}}}
\newcommand{\HH}{\operatorname{H}}
\newcommand{\spec}{\operatorname{Spec}}
\newcommand{\Ker}{\operatorname{Ker}}
\newcommand{\ideal}[1]{\mathfrak{#1}}
\newcommand{\m}{\ideal{m}}
\newcommand{\n}{\ideal{n}}
\newcommand{\fm}{\ideal{m}}
\newcommand{\fn}{\ideal{n}}
\newcommand{\fr}{\ideal{r}}
\newcommand{\wt}{\widetilde}
\newcommand{\comp}[1]{\widehat{#1}}
\newcommand{\ol}{\overline}
\newcommand{\wti}{\widetilde}
\newcommand{\bbz}{\mathbb{Z}}
\newcommand{\from}{\leftarrow}
\newcommand{\xra}{\xrightarrow}
\newcommand{\xla}{\xleftarrow}
\newcommand{\vf}{\varphi}
\newcommand{\y}{\mathbf{y}}
\newcommand{\x}{\mathbf{x}}
\renewcommand{\geq}{\geqslant}
\renewcommand{\leq}{\leqslant}
\renewcommand{\ker}{\Ker}
\newcommand{\Ext}[4][R]{\operatorname{Ext}_{#1}^{#2}(#3,#4)}	
\newcommand{\Rhom}[3][R]{\mathbf{R}\!\operatorname{Hom}_{#1}(#2,#3)}	
\newcommand{\Lotimes}[3][R]{#2\otimes^{\mathbf{L}}_{#1}#3}
\newcommand{\Otimes}[3][R]{#2\otimes_{#1}#3}
\newcommand{\Comp}[2]{\widehat{#1}^{\ideal{#2}}}
\numberwithin{equation}{lem}
\newcommand{\power}[2]{#1[\![#2]\!]}
\begin{document}

\bibliographystyle{amsplain}

\title{Cohen Factorizations: Weak Functoriality and Applications}

\dedicatory{To Hans-Bj\o rn Foxby on the occasion of his sixty-fifth birthday}

\author{Saeed Nasseh}
\author{Sean Sather-Wagstaff}

\thanks{Sather-Wagstaff  was supported in part by North Dakota EPSCoR, 
National Science Foundation Grant EPS-0814442,
and  a grant from the NSA}

\address{Department of Mathematics,
North Dakota State University Dept \# 2750,
PO Box 6050,
Fargo, ND 58108-6050
USA}

\email{saeed.nasseh@ndsu.edu}
\urladdr{https://www.ndsu.edu/pubweb/~{}nasseh/}

\email{sean.sather-wagstaff@ndsu.edu}
\urladdr{http://www.ndsu.edu/pubweb/\~{}ssatherw/}


\keywords{CI-dimension, CM-dimension, Cohen factorizations, 
Cohen-Macaulay homomorphisms, flat dimension, 
G-dimension, Gorenstein homomorphisms,
quasi-Cohen-Macaulay hommorphisms, quasi-Gorenstein homomorphisms}
\subjclass[2010]{13B40, 13D02, 13D05, 13D07}

\begin{abstract}
We investigate Cohen factorizations of local ring homomorphisms from
three perspectives. First, we prove a ``weak functoriality'' result for
Cohen factorizations: certain morphisms of local ring homomorphisms
induce morphisms of Cohen factorizations. 
Second, we use Cohen factorizations to study 
the  properties of local ring homomorphisms
(Gorenstein, Cohen-Macaulay, etc.) in certain commutative diagrams.
Third, we use Cohen factorizations to investigate the structure of
quasi-deformations of local rings, with an eye on the question of
the behavior of CI-dimension in short exact sequences.
\end{abstract}

\maketitle

\section{Introduction} \label{sec0}

\begin{convention*}
Throughout this paper, the term ``ring'' is short for 
``commutative noetherian ring with identity''. 
A local ring is ``complete'' when it is complete with respect to its maximal ideal.
Let $R$, $S$, and $T$ be rings.
\end{convention*}

Cohen factorizations were introduced in~\cite{avramov:solh}
as tools to study local ring homomorphisms. 
(See Section~\ref{sec120417b} for definitions and background material.)
The utility of these factorizations
can be seen in their many applications; see, e.g.,~\cite{avramov:lcih,
avramov:glh, avramov:lgh, avramov:rhafgd, avramov:cmporh, frankild:qcmpolh,
iyengar:golh, sather:cidfc}. The main point of this construction is that
it allows one to study a local ring homomorphism  by 
replacing it with a surjective one; thus, one can  assume that
the target is finitely generated over the source, so one can apply finite homological
algebra techniques. 

In Section~\ref{sec120426a} of this paper, we investigate  functorial
properties of Cohen factorizations. The main result of this section is the following;
its proof is in~\ref{proof120417a}.
Example~\ref{ex130327a} shows that the separability assumptions~\eqref{intthm120201a3} are necessary.

\begin{intthm}\label{intthm120201a}
Consider a commutative diagram of local ring homomorphisms
\begin{equation}\label{eq120222a}\tag{A.1}
\begin{split}
\xymatrix{
(R,\fm)\ar[r]^{\varphi}\ar[d]_{\alpha}&(S,\fn)\ar[d]^{\beta}\\
(\widetilde{R},\widetilde{\fm})\ar[r]^{\widetilde{\varphi}}&(\widetilde{S},\widetilde{\fn})
}
\end{split}\end{equation}
with the following properties:
\begin{enumerate}[\rm(1)]
\item\label{intthm120201a2}
$\alpha$ and $\vf$ have regular factorizations
$R\xra{\dot{\alpha}} R''\xra{\alpha'} \wti R$
and $R\xra{\dot{\varphi}} R'\xra{\varphi'} S$,
e.g., $\wti R$ and $S$ are complete,
\item\label{intthm120201a3}
$\widetilde{S}$ is complete, and
the  field extensions 
$R/\m\to \wti R/\wti \m\to\widetilde{S}/\widetilde{\fn}$  
are separable.
\end{enumerate}
Let
$S\xra{\dot{\beta}} S'\xra{\beta'} \wti S$
and 
$\widetilde{R}\xra{\dot{\wti\vf}} \wti R'\xra{\wti\vf'} \widetilde{S}$
be   Cohen factorizations
of $\beta$ and $\widetilde{\varphi}$. 
Then there is a commutative diagram of local ring
homomorphisms 
\begin{equation}\label{eq120222z}\tag{A.2}
\begin{split}
\xymatrix{
R\ar[r]^{\dot{\varphi}}\ar[d]_{\dot\alpha}&R'\ar[r]^{\varphi'}\ar[d]^{\dot\gamma}&S\ar[d]^{\dot\beta}\\
R''\ar[r]^{\dot{\sigma}}\ar[d]_{\alpha'}&T\ar[r]^{\sigma'}\ar[d]^{\gamma'}&S'\ar[d]^{\beta'}\\
\widetilde{R}\ar[r]^{\dot{\wti\vf}}&\wti{R}'\ar[r]^{\wti\vf'}&\widetilde{S}
}
\end{split}\end{equation}
such that the
diagrams $R'\xra{\dot\gamma}T\xra{\gamma'}\wti{R}'$
and $R''\xra{\dot\sigma}T\xra{\sigma'}S'$ are minimal Cohen factorizations.
\end{intthm}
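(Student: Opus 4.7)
The plan is to build the complete local ring $T$ together with the four new homomorphisms $\dot\gamma,\gamma',\dot\sigma,\sigma'$ in two stages, applying the existence of Cohen factorizations together with the weak functoriality (``lifting'') machinery developed earlier in Section~\ref{sec120426a}.

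\emph{Stage 1 (middle column).} First produce a local ring homomorphism $\rho\colon R'\to\wti R'$ satisfying $\rho\dot\vf=\dot{\wti\vf}\alpha$. This is a lifting problem: $R\xra{\dot\vf} R'$ is flat with regular closed fiber, and we wish to extend the composite $\dot{\wti\vf}\alpha\colon R\to\wti R'$, whose target is complete. Such a lift exists by weak functoriality of Cohen factorizations, with the hypothesis that $R/\m\to\wti R/\wti{\m}$ is separable used to guarantee regularity of closed fibers. Once $\rho$ is in hand, apply the Cohen factorization theorem to $\rho$ to obtain a minimal Cohen factorization $R'\xra{\dot\gamma}T\xra{\gamma'}\wti R'$, defining the middle column. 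By construction, $\rho\dot\vf=\dot{\wti\vf}\alpha$ makes the top-left and bottom-left squares of~\eqref{eq120222z} commute.

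\emph{Stage 2 (middle row).} Next construct $\dot\sigma$ and $\sigma'$. For $\dot\sigma\colon R''\to T$, note that $\dot\alpha\colon R\to R''$ is the flat-with-regular-closed-fiber part of the regular factorization of $\alpha$, and $T$ is complete. Lifting the composite $\dot\gamma\dot\vf\colon R\to T$ through $\dot\alpha$ (by another instance of weak functoriality) yields $\dot\sigma$ with $\dot\sigma\dot\alpha=\dot\gamma\dot\vf$. For $\sigma'\colon T\to S'$, lift $\wti\vf'\gamma'\colon T\to\wti S$ through the surjection $\beta'\colon S'\onto \wti S$ in a way that simultaneously satisfies $\sigma'\dot\gamma=\dot\beta\vf'$; this second lift uses the weak functoriality set-up applied to the square obtained from $R'\xra{\vf'}S\xra{\dot\beta}S'$ and $T\xra{\gamma'}\wti R'\xra{\wti\vf'}\wti S$, with the separability of $\wti R/\wti{\m}\to\wti S/\wti{\n}$ ensuring the required regular closed fiber.

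\emph{Stage 3 (verification).} Finally, verify that $R''\xra{\dot\sigma}T\xra{\sigma'}S'$ is a minimal Cohen factorization: flatness of $\dot\sigma$ with regular closed fiber follows by a composition/base-change argument from the Cohen factorization data of Stage~1 and the regularity of $\dot\alpha$; surjectivity of $\sigma'$ follows from $\beta'\sigma'=\wti\vf'\gamma'$ together with surjectivity of $\beta'$ and $\gamma'$; completeness of $T$ is built in; and minimality is inherited from the minimal Cohen factorization of $\rho$ chosen in Stage~1. Commutativity of the bottom-left and bottom-right squares of~\eqref{eq120222z} then reduces to a diagram chase using the defining equations of $\dot\sigma$ and $\sigma'$.

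The main obstacle is the construction of $\sigma'$ in Stage~2: unlike $\dot\gamma,\gamma',\dot\sigma$, each of which comes from a single lift through a regular factorization, $\sigma'$ must be produced so that it commutes simultaneously with two independently chosen pieces of data---the Cohen factorization of $\beta$ on the right-hand column and the middle column built in Stage~1. The separability of both residue field extensions in~\eqref{intthm120201a3} is essential precisely to make each of these lifts possible, and Example~\ref{ex130327a} confirms that these hypotheses cannot be dispensed with.
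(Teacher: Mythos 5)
There is a genuine gap, and it sits exactly where you locate ``the main obstacle'': the production of $\sigma'$. Your Stage~1 fixes $T$ as a (minimal) Cohen factorization of a map $\rho\colon R'\to\wti R'$ built from the vertical and bottom data alone, and only afterwards do you try to manufacture a \emph{surjection} $\sigma'\colon T\onto S'$. No lifting or formal-smoothness argument produces a map through the surjection $\beta'$, and, more fundamentally, no such $\sigma'$ need exist for your $T$: take $R=S=\wti R=\wti S=k$ with all four maps the identity, $R'=R''=\wti R'=k$, and choose the (perfectly legitimate, non-minimal) Cohen factorization $k\to k[[x]]\onto k$ of $\beta$, so $S'=k[[x]]$. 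Your Stage~1 yields $T=k$, and a field cannot surject onto $k[[x]]$. The entire difficulty of the theorem is that the single ring $T$ must surject onto $\wti R'$ and onto $S'$ simultaneously, and the paper's proof achieves this by a step your outline has no analogue of: it forms the fiber product $\wti T=\wti R'\times_{\wti S}S'$ (a complete local noetherian ring), maps the already-constructed ring into $\wti T$, takes a Cohen factorization of \emph{that} map, and composes with the two surjective projections of the pullback to obtain $\gamma'$ and $\sigma'$ at once. Relatedly, the paper builds the middle \emph{row} first, as a Cohen factorization of a horizontal map $\sigma\colon R''\to S'$ supplied by Proposition~\ref{lem120222a}, so that $\dot\sigma$ is weakly regular by construction; your $\dot\sigma$, obtained as an abstract lift of $\dot\gamma\dot\vf$ along $\dot\alpha$, is merely a ring map satisfying a commutativity constraint, and the asserted ``composition/base-change argument'' for its flatness and regular closed fiber does not apply, since your $T$ is not a base change of anything over $R''$.

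A secondary problem: your Stage~1 lift of $\dot{\wti\vf}\alpha$ along $\dot\vf$ requires formal smoothness of $\dot\vf$, i.e.\ separability of $R/\m\to R'/\m_{R'}\cong S/\n$, which is not among the hypotheses (only $R/\m\to\wti R/\wti\m\to\wti S/\wti\n$ are assumed separable); your parenthetical appeal to separability of $R/\m\to\wti R/\wti\m$ concerns $\alpha$, not $\dot\vf$. The paper instead obtains its vertical map from Proposition~\ref{thm120201a}, whose comparison argument (Fact~\ref{fact120418a}(c)) needs only separability of $R/\m\to\wti S/\wti\n$. So both the order of construction and the mechanisms must change: first the horizontal map $R''\to S'$ and its Cohen factorization, then the vertical map into it, then the fiber-product correction that forces the two surjectivities simultaneously.
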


We think of this as a result about functoriality of regular (e.g., Cohen) 
factorizations as follows.
The diagram~\eqref{eq120222a} is a  morphism in the category of 
local ring homomorphisms. Our result provides the following commutative
diagram of local ring homomorphisms where $\gamma=\gamma'\dot\gamma$
$$\xymatrix{
R\ar[r]^{\dot{\varphi}}\ar[d]_{\alpha}&R'\ar[r]^{\varphi'}\ar[d]^{\gamma}&S\ar[d]^{\beta}\\
\widetilde{R}\ar[r]^{\dot{\wti\vf}}&\wti{R}'\ar[r]^{\wti\vf'}&\widetilde{S}
}
$$
which is a morphism in the category of regular factorizations.
Of course, the operation that maps a local ring homomorphism to a
regular (or Cohen) factorization is not well-defined; hence our terminology ``weak 
functoriality''.

Given  a diagram~\eqref{eq120222a} where $\alpha$ and $\beta$ are ``nice'', 
the maps $\vf$ and $\wti\vf$ are intimately related. This 
maxim is the subject of Section~\ref{sec120306a},
which culminates in the proof of the next result.
It is one of the applications of Cohen factorizations 
mentioned in the title; see~\ref{proof120417c} for the proof.

\begin{intthm}
\label{prop120322a}
Consider a commutative diagram of local ring homomorphisms
$$
\xymatrix{
(R,\fm)\ar[r]^{\varphi}\ar[d]_{\alpha}&(S,\fn)\ar[d]^{\beta}\\
(\widetilde{R},\widetilde{\fm})\ar[r]^{\widetilde{\varphi}}&(\widetilde{S},\widetilde{\fn})
}
$$
such that $\alpha$ is  weakly Cohen,
$\beta$ is  weakly regular, and
the induced map $\widetilde{R}/\widetilde{\fm}\to \widetilde{S}/\widetilde{\fn}$ is separable. Let P be one of the following conditions:
Gorenstein, quasi-Gorenstein, complete intersection, 
Cohen-Macaulay, quasi-Cohen-Macaulay.
Then $\vf$ is P if and only if $\wti\vf$ is P.
\end{intthm}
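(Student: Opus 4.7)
The strategy is to apply Theorem~\ref{intthm120201a} to align Cohen factorizations of $\vf$ and $\wti\vf$ with a bridging factorization in the middle row, and then transfer property P across the resulting diagram using known ascent/descent results for surjective local homomorphisms. First, I would verify the hypotheses of Theorem~\ref{intthm120201a}. The weakly Cohen assumption on $\alpha$ gives separability of $R/\fm\to\wti R/\wti\fm$, which combined with the given separability of $\wti R/\wti\fm\to\wti S/\wti\fn$ supplies condition~\eqref{intthm120201a3}; the existence of regular factorizations needed in~\eqref{intthm120201a2} follows from the completeness and flatness properties built into the weakly Cohen and weakly regular hypotheses. Theorem~\ref{intthm120201a} then produces the diagram~\eqref{eq120222z}.

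Next, I would invoke the standard independence of Cohen factorizations in defining each property P: a local ring homomorphism has P if and only if the surjective map in some (equivalently, any) Cohen factorization has P as a finite local homomorphism, via the appropriate homological criterion (finite G-dimension for Gorenstein, finite CI-dimension for complete intersection, depth/dimension equalities for Cohen-Macaulay, and the analogues for quasi-Gorenstein and quasi-Cohen-Macaulay). Applied to the top and bottom rows of~\eqref{eq120222z}, this reduces the theorem to the claim that the surjection $\vf'\colon R'\to S$ has P if and only if the surjection $\wti\vf'\colon\wti R'\to\wti S$ has P.

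To establish this equivalence, I would bridge $\vf'$ and $\wti\vf'$ through the surjection $\sigma'\colon T\to S'$ in the middle row. The top-right square $\sigma'\dot\gamma=\dot\beta\vf'$ exhibits $\sigma'$ as an extension of $\vf'$ along the weakly regular maps $\dot\gamma\colon R'\to T$ and $\dot\beta\colon S\to S'$, both of which are flat with regular closed fibers as the ``flat halves'' of Cohen factorizations. The standard ascent/descent results for P along weakly regular base change, for surjective local ring homomorphisms, then give that $\vf'$ has P if and only if $\sigma'$ has P. The bottom-right square $\beta'\sigma'=\wti\vf'\gamma'$, combined with the surjections $\gamma'$ and $\beta'$ and the regular factorization of the weakly Cohen $\alpha$ supplied by the left column, yields the symmetric conclusion that $\sigma'$ has P if and only if $\wti\vf'$ has P, completing the chain.

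The main obstacle will be invoking the correct ascent/descent statement for each of the five properties and tracking where the separability hypothesis is used. For Gorenstein and Cohen-Macaulay, base change along a weakly regular map with separable closed-fiber residue field extension preserves and reflects P, and the separability hypothesis in the theorem, propagated through Theorem~\ref{intthm120201a}, ensures this applies to every weakly regular map appearing in~\eqref{eq120222z}. For complete intersection, quasi-Gorenstein, and quasi-Cohen-Macaulay, analogous but slightly different criteria from the literature must be invoked; handling all five properties uniformly will require careful bookkeeping of the relevant homological invariants along each map in the diagram.
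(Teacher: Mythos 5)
Your overall strategy (align Cohen factorizations of $\vf$ and $\wti\vf$ via the weak functoriality machinery, then transfer P between the induced surjections) is the right general shape, but there is a genuine gap at the transfer step. You invoke Theorem~\ref{intthm120201a}, whose conclusion is only that the squares in~\eqref{eq120222z} \emph{commute} and that the middle row and column are Cohen factorizations; it asserts nothing about pushouts. Your bridge ``$\vf'$ has P iff $\sigma'$ has P by weakly regular base change'' needs the identification $S'\cong T\otimes_{R'}S$ (and similarly $\wti S\cong \wti R'\otimes_{T}S'$ or $\wti S\cong\wti R'\otimes_{R'}S$): mere commutativity of a square with flat vertical maps does not let you compare the kernels, the relative dualizing complexes, or any of the homological invariants of the two surjections. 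This is exactly why the paper does not use Theorem~\ref{intthm120201a} here but rather Proposition~\ref{thm120201a}, whose extra content --- obtained from \emph{minimal} Cohen factorizations and the comparison argument --- is precisely that the second square is a pushout with $\alpha'\colon R'\to\wti R'$ weakly regular. With that pushout in hand, the paper verifies each case concretely: for complete intersection, $\ker(\wti\vf')=\ker(\vf')\wti R'$ and flat local base change preserves and reflects ``generated by a regular sequence''; for (quasi-)Cohen--Macaulay, $D^{\grave{\wti\vf}}\simeq\Lotimes[R']{\wti R'}{D^{\grave\vf}}$ so the amplitudes agree; and Gorenstein/quasi-Gorenstein are handled separately and more cheaply via the Bass series relation $I^S_S(t)=I^R_R(t)I_\vf(t)$, without any factorization at all.

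Two further points you would need to repair even after fixing the pushout issue. First, your stated ``homological criteria'' are off: finite G-dimension is not the Gorenstein criterion (it is the finiteness hypothesis under which quasi-Gorenstein is defined, via the Bass series), and finite CI-dimension is not the criterion for a map to be complete intersection (that is a condition on the kernel in a Cohen factorization). Second, you never address the simultaneous-finiteness bookkeeping that separates each property from its ``quasi-'' version --- the paper needs Proposition~\ref{prop120312b} (flat dimension) and Proposition~\ref{prop120312c} (G-dimension, via Proposition~\ref{prop120312e}) precisely for this, and these are not consequences of the diagram alone.
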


Section~\ref{sec120306a} also contains, among other things, a
base change result for CI-dimension and CM-dimension
(Proposition~\ref{prop120312e}) which may be
of independent interest. 

The paper concludes with Section~\ref{sec120417a}, which is devoted 
almost entirely to the proof of
the next result; see~\ref{proof120417b}.
It is another application of Cohen factorizations. 
Also, it is related to the short exact sequence question for
CI-dimension, as we describe in Remark~\ref{disc120426a}.

\begin{intthm}\label{intthm120201d}
For $i=1,2$
let $(R,\m,k)\xra{\vf_i}(R_i,\m_i,k_i)\xla{\tau_i}(Q_i,\n_i,k_i)$ be a quasi-deformation
such that the field extension $k\to k_i$ is separable.
\begin{enumerate}[\rm(a)]
\item\label{intthm120201d01}
Then there exists a 
commutative diagram of local ring homomorphisms
$$
\xymatrix@C=7mm{
(Q_1,\n_1,k_1)\ar[r]^-{\tau_1}\ar[dr]_-{\ol\delta_1\delta_1}&(R_1,\m_1,k_1)\ar[rd]_-{\sigma_1\beta_1}&(R,\m,k)\ar[r]^-{\vf_2}\ar[l]_-{\vf_1}\ar[d]^-{\alpha'}
&(R_2,\m_2,k_2)\ar[ld]^-{\sigma_2\beta_2}&(Q_2,\n_2,k_2)\ar[l]_-{\tau_2}\ar[dl]^-{\ol\delta_2\delta_2}\\
&(\overline{Q}_1,\ol{\n}_1,k')\ar[r]_-{\ol\gamma_1}&(R',\m',k')&(\overline{Q}_2,\ol{\n}_1,k')\ar[l]^-{\ol\gamma_2}& \hspace{11mm}\text{\emph{(C.1)}}
}
$$
such that $\alpha'$ is flat, each map $\ol\delta_i\delta_i$ is weakly Cohen, and
each map $\ol\gamma_i$ is surjective.
(See the proof for an explanation of the labels $\delta_i$, $\sigma_i$, etc.)
\item\label{intthm120201d02}
Assume that each map $\vf_i$ is weakly Cohen. 
Then the maps in  diagram~\emph{(C.1)} satisfy the following properties for $i=1,2$:
\begin{enumerate}[\rm(b1)]
\item\label{intthm120201d1}
The maps
$R_i\to R'$ are weakly Cohen.
\item\label{intthm120201d2}
The diagram $R\to R'\twoheadleftarrow \overline{Q}_i$ is a  quasi-deformation
and each parallelogram diagram is a pushout.
\item\label{intthm120201d4}
Given  a  $R$-module $M$, if
$\pd_{Q_i}(M\otimes_RR_i)<\infty$, then $\pd_{\overline{Q}_i}(M\otimes_RR')<\infty$.
\end{enumerate}
\end{enumerate}
\end{intthm}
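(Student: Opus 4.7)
The plan is to construct $R'$ as a common flat local enlargement of $R$ and then invoke Theorem~\ref{intthm120201a} to assemble the rest of~(C.1) with all required compatibilities. Because each extension $k\to k_i$ is separable, I fix a separable field extension $k\to k'$ inside a common algebraic closure of $k$ that contains both $k_1$ and $k_2$ as $k$-subfields. A standard gonflement (or Cohen's structure theorem applied after completion) yields a flat local homomorphism $\alpha'\colon (R,\m)\to (R',\m')$ with $\m R'=\m'$ and $R'/\m'=k'$; in particular $\alpha'$ is weakly Cohen. Because the inclusion $k_i\hookrightarrow k'$ is itself separable, a further gonflement over $R_i$ (compatible with the base change along $\vf_i$) equips $R'$ with a flat local $R_i$-algebra structure extending $\alpha'$; call the resulting composite $\sigma_i\beta_i\colon R_i\to R'$, where the factorization through an intermediate ring will be recorded as its Cohen factorization.

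To complete part~(a), apply Theorem~\ref{intthm120201a} to the commutative square with vertical maps $\alpha'$ and $\sigma_i\beta_i$ and horizontal maps $\vf_i$ and the identity on $R'$ (replacing $R'$ by its completion if needed to meet the completeness/separability hypotheses, and pulling the resulting data back via faithful flatness). This produces a minimal Cohen factorization $R_i\xra{\beta_i}S_i\xra{\sigma_i}R'$ together with the intermediate ring appearing in~(C.1). Applying the same theorem to the square obtained by replacing the top row with $\tau_i$ produces the Cohen factorization $Q_i\xra{\delta_i}\overline{Q}_i^{\circ}\xra{\ol\delta_i}\overline{Q}_i$ of the map $\ol\delta_i\delta_i$ together with the surjection $\ol\gamma_i\colon\overline{Q}_i\twoheadrightarrow R'$, and every square commutes by weak functoriality. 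In particular $Q_i\to\overline{Q}_i$ is weakly Cohen and $\overline{Q}_i\to R'$ is surjective, so all the lower and diagonal maps in~(C.1) are in place.

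For part~(b), assume each $\vf_i$ is weakly Cohen. For~(b1), note that the composite $R\xra{\vf_i}R_i\xra{\sigma_i\beta_i}R'$ equals $\alpha'$, which is weakly Cohen; combined with the hypothesis on $\vf_i$, the standard cancellation property for weakly regular homomorphisms (visible from the residue-field and minimal-generator data of the Cohen factorizations constructed above) forces $\sigma_i\beta_i$ to be weakly Cohen. For~(b2), flatness of $Q_i\to\overline{Q}_i$ (from its being weakly Cohen) means the $Q_i$-regular sequence generating $\ker(\tau_i)$ stays $\overline{Q}_i$-regular; by construction its image generates $\ker(\ol\gamma_i)$, so $R\to R'\twoheadleftarrow\overline{Q}_i$ is a quasi-deformation, and the pushout identity $R_i\otimes_{Q_i}\overline{Q}_i\cong R'$ follows from $R_i=Q_i/\ker(\tau_i)$. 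For~(b4), flat base change along $Q_i\to\overline{Q}_i$ preserves finite projective dimension, so $\pd_{\overline{Q}_i}\bigl((M\otimes_R R_i)\otimes_{Q_i}\overline{Q}_i\bigr)<\infty$; the pushout from~(b2) identifies this tensor product with $M\otimes_R R'$, as required.

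The main obstacle lies in part~(a): arranging every commutativity of~(C.1) simultaneously, so that the Cohen factorizations of $Q_i\to R'$ and $R_i\to R'$ are mutually compatible with the given surjection $\tau_i$ and the flat map $\alpha'$. This is exactly what the weak functoriality of Theorem~\ref{intthm120201a} delivers, and it is the reason the separability of each $k\to k_i$ is indispensable (cf.~Example~\ref{ex130327a}); without it, the outputs of that theorem need not exist compatibly across the two values of $i$.
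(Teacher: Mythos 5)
Your construction of $R'$ is where the argument breaks down. You take $\alpha'\colon R\to R'$ to be a gonflement with $\m R'=\m'$ and $R'/\m'=k'$, i.e., a weakly Cohen map whose closed fiber is a \emph{field}, and then assert that a further gonflement ``equips $R'$ with a flat local $R_i$-algebra structure extending $\alpha'$.'' This is impossible in general. Take $R=k$ a field and $\vf_1\colon k\to R_1=\power kX$ the inclusion (a perfectly good flat map in a quasi-deformation, with $k\to k_1=k$ trivially separable). Your $R'$ would be a field extension $k'$ of $k$, and any local homomorphism $\power kX\to k'$ must send $X$ to $0$; a flat local homomorphism of noetherian local rings is faithfully flat, hence injective, so no flat local $R_1$-algebra structure on $k'$ exists. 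The point is that the closed fiber of $\alpha'$ must be large enough to accommodate the closed fibers of both $\vf_1$ and $\vf_2$; this is exactly why the paper builds $R'$ as the complete tensor product $\wti R_1\widehat\otimes_{\wti R}\wti R_2$ of Cohen extensions $\wti R_i$ of the $R_i$ over a Cohen extension $\wti R$ of $R$ (using \cite[Lemme 19.7.1.2]{grothendieck:ega4-1} to see this is local, noetherian, complete, and flat over each $\wti R_i$), rather than as a gonflement of $R$. Everything downstream of your $R'$ --- the maps $\sigma_i\beta_i$, the application of Theorem~\ref{intthm120201a}, and the pushout identifications in (b2) --- depends on this nonexistent structure, so the rest of the argument does not get off the ground as written.

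Two secondary points. First, the paper does not use Theorem~\ref{intthm120201a} here at all: the lower rows of (C.1) come from Lemma~\ref{intthm120201b} (lifting $\vf_i$ across weakly Cohen maps) and two applications of Fact~\ref{fact120418a}\eqref{fact120418a2}, which simultaneously deliver the Cohen factorizations $Q_i\to\wti Q_i\to\wti R_i$ and $\wti Q_i\to\ol Q_i\to R'$ \emph{and} the pushout property of each square; your (b2) instead asserts ``by construction its image generates $\ker(\ol\gamma_i)$'' without a mechanism that guarantees it. Second, in (b1) you invoke a ``cancellation property'' to deduce that $\sigma_i\beta_i$ is weakly Cohen from $\alpha'=(\sigma_i\beta_i)\vf_i$ being weakly Cohen; no such cancellation is available (regularity of the closed fiber of a composite does not descend to the second factor in general), and the paper instead computes the closed fiber of $\sigma_1$ directly as $\wti R_2/\wti\m\wti R_2$ and checks its regularity using the hypothesis that $\vf_2$ is weakly Cohen. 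Your part (b4) is fine once the pushouts of (b2) are actually in place.
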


As the list of references indicates, this work is built on ideas pioneered
by Hans-Bj\o rn Foxby, his collaborators, and his students. 
We are deeply grateful to him for his many contributions
to this field through his research and his support.

\section{Background}
\label{sec120417b}

This section lists important foundational material for use in the rest of the paper.

\subsection*{Factorizations}

\begin{defn}\label{weakly reg}
Let $\vf\colon (R,\fm)\to (S,\fn)$ be a  local ring homomorphism.
\begin{enumerate}[(a)]
\item
The \emph{embedding dimension} of $\vf$ is $\edim(\vf)=\edim(S/\m S)$.
\item\label{weakly reg1}
The map $\vf$ is \emph{weakly regular}
if it is flat and the closed fiber $S/\fm S$ is  regular.
\item
The map $\vf$ is a \emph{weakly Cohen} if 
it is weakly regular, 
and the induced field extension $R/\m\to S/\n$ is separable, e.g., if $\operatorname{char}(R/\m)=0$ or $R/\m$ is perfect of positive characteristic.
\item
The map $\vf$ is  \emph{Cohen} if 
it is weakly Cohen such that
$\n=\m S$, that is, it is weakly Cohen such that the closed fiber is a field.
\end{enumerate}
\end{defn}

\begin{fact}\label{fact120215a}
Let $R\xra{\vf} S\xra{\psi} T$ be weakly regular local ring homomorphisms.
Then the composition $\psi\vf$ is weakly regular such that
$\edim(\psi\vf)=\edim(\psi)+\edim(\vf)$;
see~\cite[5.9]{iyengar:golh}.
\end{fact}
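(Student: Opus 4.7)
The plan is to construct $R'$ as a common flat local extension of $R$ fitting both quasi-deformations simultaneously, and then to obtain $\overline{Q}_i$ by base change. The key tools are the existence of Cohen factorizations (applied after passage to completions) together with the weak functoriality result Theorem~\ref{intthm120201a}.

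For part~(a), I would first pass to completions: let $\beta_i\colon R_i\to\widehat{R_i}$ and $\delta_i\colon Q_i\to\widehat{Q_i}$ be the respective completion maps, and note that the induced surjection $\widehat{Q_i}\twoheadrightarrow\widehat{R_i}$ remains a deformation because any $Q_i$-regular sequence stays regular after completion. Next, for each $i$, apply the existence of Cohen factorizations to $\beta_i\vf_i\colon R\to\widehat{R_i}$ to obtain $R\xra{\dot{\vf_i}}A_i\xra{\vf'_i}\widehat{R_i}$ with $R\to A_i$ weakly regular; the separability hypothesis $k\to k_i$ upgrades this to weakly Cohen. To amalgamate the two factorizations into a common target, I would invoke Theorem~\ref{intthm120201a} (possibly iterated), producing a ring $R'$ equipped with compatible weakly regular maps from $A_1$ and $A_2$ and with surjections realizing $R'$ as a common flat local extension of $R$ dominating both $\widehat{R_i}$. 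This yields the flat map $\alpha'\colon R\to R'$ together with the composites $\sigma_i\beta_i\colon R_i\to R'$. Finally, define $\overline{Q}_i$ by base-changing $\widehat{Q_i}$ along the induced flat local map $\widehat{R_i}\to R'$; this gives the surjection $\ol\gamma_i\colon\overline{Q}_i\twoheadrightarrow R'$ and the composite $\ol\delta_i\delta_i\colon Q_i\to\overline{Q}_i$.

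For part~(b), the weakly Cohen hypothesis on each $\vf_i$ makes the verifications routine. Item~(b1) follows from Fact~\ref{fact120215a}: the map $R_i\to R'$ decomposes as the completion $R_i\to\widehat{R_i}$ (itself weakly Cohen, since its closed fiber is the field $k_i$) followed by the weakly regular $\widehat{R_i}\to R'$, while the composed residue extension $k_i\to k'$ is separable by tracking separability through the amalgamation. For~(b2), the parallelograms are pushouts by the very construction of $\overline{Q}_i$ as a base change; the quasi-deformation property $R\to R'\twoheadleftarrow\overline{Q}_i$ then holds because $\alpha'$ is flat and $\ker(\ol\gamma_i)$ is the image of the $Q_i$-regular sequence $\ker(\tau_i)$, which remains regular after the flat base change $Q_i\to\overline{Q}_i$. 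For~(b3), given $\pd_{Q_i}(M\otimes_RR_i)<\infty$, the isomorphism $M\otimes_RR'\cong(M\otimes_RR_i)\otimes_{R_i}R'$ combined with flatness of $Q_i\to\overline{Q}_i$ yields $\pd_{\overline{Q}_i}(M\otimes_RR')<\infty$ by standard flat base change for projective dimension.

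The main obstacle is the amalgamation step in part~(a): merging two independently chosen Cohen factorizations of $\vf_1$ and $\vf_2$ into a single flat local extension of $R$ while retaining the surjections onto the targets $\widehat{R_i}$. This is precisely the content of Theorem~\ref{intthm120201a}, and the separability assumption $k\to k_i$ in the present statement is exactly what supplies the separability hypothesis required by that result. A secondary technicality is verifying that the base change defining $\overline{Q}_i$ produces a Noetherian local ring surjecting onto $R'$ with regular-sequence-generated kernel; this is handled by working systematically with completions throughout, so that Cohen-factorization machinery remains applicable at each stage.
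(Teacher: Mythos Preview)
Your proposal addresses the wrong statement. The statement in question is Fact~\ref{fact120215a}: if $R\xra{\vf} S\xra{\psi} T$ are weakly regular local homomorphisms, then $\psi\vf$ is weakly regular with $\edim(\psi\vf)=\edim(\psi)+\edim(\vf)$. In the paper this is not proved at all; it is simply recorded as a fact with a citation to~\cite[5.9]{iyengar:golh}. A direct argument would show that a composition of flat local maps is flat and local, and that the closed fiber $T/\m T$ of $\psi\vf$ is regular of the correct embedding dimension because $S/\m S\to T/\m T$ is flat with regular closed fiber $T/\n T$ and regular source $S/\m S$.

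What you have written is instead a sketch of Theorem~\ref{intthm120201d} (Theorem~C), the result about amalgamating two quasi-deformations into a common diagram~(C.1). Even viewed as a proof of that theorem, your outline diverges from the paper's approach at the crucial amalgamation step: you propose to merge the two Cohen factorizations via Theorem~\ref{intthm120201a}, but that theorem requires a commutative square of ring maps to begin with, and no such square relating $\widehat{R_1}$ and $\widehat{R_2}$ is given. The paper's actual construction avoids this by first building a common Cohen extension $(\wti R,\wti\m,k')$ of $R$ for a chosen join $k'$ of $k_1$ and $k_2$ over $k$, then using Lemma~\ref{intthm120201b} to produce compatible maps $\wti R\to\wti R_i$, and finally defining $R'$ as the complete tensor product $\wti R_1\comp{\otimes}_{\wti R}\wti R_2$, which is shown to be local via~\cite[Lemme 19.7.1.2]{grothendieck:ega4-1}. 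Your base-change description of $\overline{Q}_i$ is also not what the paper does: $\overline{Q}_i$ arises there from a Cohen factorization of $\sigma_i\gamma_i$ (via Fact~\ref{fact120418a}\eqref{fact120418a2}), not from a fiber product over $\widehat{R_i}$.
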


The central objects of study for this paper, 
defined next, are from the work of Avramov, Foxby, and B. Herzog~\cite{avramov:solh}.

\begin{defn}\label{weakly reg'}
Let $\vf\colon (R,\fm)\to (S,\fn)$ be a  local ring homomorphism.
\begin{enumerate}[(a)]
\item\label{weakly reg2}
A \emph{regular  factorization} of $\vf$
is a diagram of local  homomorphisms
$R\xra{\dot{\varphi}} R'\xra{\varphi'} S$
such that $\varphi=\varphi'\dot{\varphi}$, the map $\dot{\varphi}$ is 
weakly regular, and $\varphi'$ is surjective.
\item\label{weakly reg3}
A
\emph{Cohen factorization} of $\vf$ is a regular factorization
$R\xra{\dot{\varphi}} R'\xra{\varphi'} S$ of $\vf$ such that $R'$ is complete.
\item\label{weakly reg4}
A \emph{comparison} of one Cohen factorization
$R\xra{\dot{\varphi}} R'\xra{\varphi'} S$ of $\vf$ to another one
$R\xra{\ddot{\varphi}} R''\xra{\varphi''} S$
 is a local homomorphism
$\upsilon\colon R'\to R''$ making the following diagram commute:
$$
\xymatrix{
&R'\ar[rd]^{\varphi'}\ar[d]_{\upsilon}&\\
R\ar[ur]^{\dot{\varphi}}\ar[r]^{\ddot{\varphi}}&
R''\ar[r]^{\varphi''}& S.
}
$$
\item
The \emph{semicompletion} of $\vf$, denoted $\grave\vf\colon R\to \comp S$, 
is the composition of $\vf$ with the natural map $S\to \comp S$.
\end{enumerate}
\end{defn}

\begin{disc}\label{remark minimal cf}
Let $R\xra{\dot{\varphi}} R'\xra{\varphi'} S$ be a regular factorization of
a local ring homomorphism
$\varphi\colon R\to S$. The surjective
homomorphism $\varphi'\colon R'\to S$ induces a
surjective homomorphism
$R'/\fm R'\to S/\fm S$
which implies that
$\edim(\dot\vf)\geq \edim(\vf)$.
Since $\dot{\varphi}$ is weakly regular, we conclude that
$$
\dim(R')-\dim(R)=\edim(\dot\vf)\geq \edim(\vf).
$$
\end{disc}

\begin{defn}\label{minimal cf}
A regular factorization
$R\xra{\dot{\varphi}} R'\xra{\varphi'} S$
of
a local ring homomorphism
$\varphi\colon R\to S$ is  \emph{minimal} if $\dim(R')-\dim(R)= \edim(\vf)$.
\end{defn}

\begin{fact}\label{fact120418a}
Let $\vf\colon (R,\m)\to (S,\n)$ and 
$\psi\colon S\to T$ be  local ring homomorphisms.

\begin{enumerate}[(a)]
\item \label{fact120418a1}
By~\cite[(1.1) Theorem and (1.5) Proposition]{avramov:solh}, if $S$ is complete, then $\vf$ has a minimal Cohen factorization.
Since $\comp S$ is complete, it follows that the semicompletion  $\grave\vf\colon R\to \comp S$ 
has a minimal Cohen factorization.

\item \label{fact120418a2}
If $\vf$ is surjective, $\psi$ is weakly regular, and $T$ is complete,
then in any minimal Cohen factorization 
$R\to R'\to T$ of the composition $\psi\vf$, we have
$T\cong R'\otimes_R S$, that is, the diagram
$$
\xymatrix{
R\ar[r] \ar[d] & R' \ar[d] \\
S \ar[r] & T}$$
is a pushout; see the proof of~\cite[(1.6) Theorem]{avramov:solh}.
\item \label{fact120418a3}
Assume that $S$ is complete, and consider two Cohen factorizations
$R\to R'\to S$ and $R\to R''\to S$ of $\vf$.
If the extension $R/\m\to S/\n$ is separable, then 
there is a comparison $\upsilon\colon R'\to R''$ of the first factorization
to the second one;
moreover, if both Cohen factorizations are minimal, then any comparison between them is an isomorphism.
See~\cite[(1.7) Proposition]{avramov:solh}.
\end{enumerate}
\end{fact}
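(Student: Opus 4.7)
The plan is to prove part (a) by an explicit three-stage construction, and then to derive part (b) by tracking properties through this construction.

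For part (a), I begin (Stage 1) by choosing a common residue field $k'$. Because each $k\to k_i$ is separable, the $k$-algebra $k_1\otimes_k k_2$ is reduced, and any of its residue fields is a separable extension of $k$ into which both $k_1$ and $k_2$ embed over $k$; let $k'$ be one such. Next (Stage 2), I construct a flat local homomorphism $\alpha'\colon(R,\m,k)\to(R',\m',k')$ with $\m'=\m R'$, e.g., by presenting $k'$ over $k$, lifting to a polynomial extension of $R$, and localizing at the appropriate maximal ideal. In particular $\alpha'$ is weakly Cohen. Finally (Stage 3), for each $i$ I build the diagonal and bottom-row maps using Cohen factorizations. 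Apply Fact~\ref{fact120418a}(a) to the semicompletion of a naturally induced composite $Q_i\to\comp{R'}$ (where the embedding $k_i\hookrightarrow k'$ from Stage 1 produces a compatible map from $R_i$ into $\comp{R'}$ via a separate minimal Cohen-factorization argument) to obtain the factorization data $\delta_i,\ol\delta_i,\ol Q_i,\ol\gamma_i$, with $\ol\gamma_i$ obtained by descending the surjection-to-$\comp{R'}$ onto $R'$. Pushing this factorization out along the surjection $\tau_i$ via Fact~\ref{fact120418a}(b) yields the parallel factorization $\beta_i,\sigma_i$ on the $R_i$ side, together with a pushout identification $R_i^{+}\cong\ol Q_i\otimes_{Q_i}R_i$, producing all of diagram (C.1) with the claimed properties (flatness of $\alpha'$, weakly-Cohenness of $\ol\delta_i\delta_i$, and surjectivity of $\ol\gamma_i$).

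For part (b), assume each $\vf_i$ is weakly Cohen. Property (b1): the map $R_i\to R'$ factors as the composition of a weakly regular factor (from the minimal Cohen factorization of Stage 3) with the surjection arising from the pushout identification; the separability built into Stage 1 upgrades ``weakly regular'' to ``weakly Cohen'' via Fact~\ref{fact120215a}, and the surjection part collapses under the pushout identity, so $R_i\to R'$ itself is weakly Cohen. Property (b2): the parallelogram identity $R'\cong\ol Q_i\otimes_{Q_i}R_i$ is precisely the pushout produced by Fact~\ref{fact120418a}(b) in Stage 3, and since $Q_i\to\ol Q_i$ is flat, the $Q_i$-regular sequence generating $\ker\tau_i$ remains regular in $\ol Q_i$, so $R\to R'\twoheadleftarrow\ol Q_i$ is a quasi-deformation. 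Property (b4) is immediate from (b2): given an $R$-module $M$, the pushout identity gives $(M\otimes_R R_i)\otimes_{Q_i}\ol Q_i\cong M\otimes_R R'$, and flat base change for projective dimension along the flat map $Q_i\to\ol Q_i$ yields $\pd_{\ol Q_i}(M\otimes_R R')=\pd_{Q_i}(M\otimes_R R_i)<\infty$.

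The main obstacle will be Stage 3: coordinating the two parallel Cohen-factorization constructions on the $R_i$ and $Q_i$ sides so that the intermediate rings live over $R'$ rather than over $\comp{R'}$, and so that the parallelograms genuinely commute and realize the required pushout identities $\ol Q_i/(\ker\tau_i)\ol Q_i\cong R'$. The separability hypothesis enters here through Fact~\ref{fact120418a}(c), whose comparison isomorphisms allow one to align different minimal Cohen factorizations into a single commuting diagram; the same hypothesis also ensures that the compositions $\ol\delta_i\delta_i$ are weakly Cohen, and not merely weakly regular, which is exactly where Stage 1's insistence on a separable common overfield $k'$ is essential.
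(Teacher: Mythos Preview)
Your proposal does not address the quoted Fact (which is a collection of cited results requiring no proof in this paper); it reads instead as an attempt at Theorem~\ref{intthm120201d}. Reviewing it as such, there is a genuine gap.

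The ring $R'$ you build in Stage~2 is a Cohen extension of $R$ with $\m R'=\m'$, hence $\dim R'=\dim R$. But diagram~(C.1) requires local maps $R_i\to R'$, and in part~(b) these must be weakly Cohen, hence flat. If $\vf_i\colon R\to R_i$ has a closed fiber of positive dimension (which ``weakly Cohen'' permits), then $\dim R_i>\dim R=\dim R'$ and no flat local map $R_i\to R'$ can exist. More basically, even for part~(a) your Stage~3 never actually produces a map $R_i\to\comp{R'}$ (or $Q_i\to\comp{R'}$): Cohen factorizations factor an \emph{existing} homomorphism, they do not manufacture one, and the lifting principle behind Lemma~\ref{intthm120201b} would require the map \emph{into} $R_i$ (namely $\vf_i$) to be weakly Cohen, which is not assumed in part~(a). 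The phrase ``via a separate minimal Cohen-factorization argument'' conceals exactly the step that fails.

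The paper's construction is different in kind. It first takes Cohen extensions $R\to\wti R$ and $R_i\to\wti R_i$ (all with residue field $k'$ and complete targets), uses Lemma~\ref{intthm120201b} to produce flat maps $\wti\vf_i\colon\wti R\to\wti R_i$, and then sets $R'=\wti R_1\,\comp\otimes_{\wti R}\,\wti R_2$, the complete tensor product. This $R'$ is large enough to receive flat maps $\sigma_i$ from each $\wti R_i$, and the closed fiber of $\sigma_1$ is isomorphic to $\wti R_2/\wti\m\wti R_2$, which is regular precisely because $\vf_2$ is weakly Cohen---this is where the extra hypothesis in part~(b) does its work. Your $R'$ corresponds to the paper's $\wti R$, not to its $R'$; the missing idea is the complete tensor product that amalgamates the two flat extensions into a single target admitting maps from both $R_1$ and $R_2$.
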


\subsection*{Homological Notions}

\begin{defn}
The flat dimension of a local ring homomorphism $\vf\colon R\to S$ is
$\fd(\vf)=\fd_R(S)$.
\end{defn}

The remaining aspects of this subsection use 
the derived category, so we specify some notations.
References on the subject 
include~\cite{gelfand:moha, hartshorne:rad, verdier:cd, verdier:1}.

\begin{defn}
The derived category of the category of  $R$-modules
is denoted $\catd(R)$. The objects of this category are the $R$-complexes
which we index homologically:
$X=\cdots\to X_i\to X_{i-1}\to\cdots$.

Let $X$ and $Y$ be $R$-complexes.
We say that $X$ is \emph{homologically bounded}
if 
$\HH_i(X)=0$ for $|i|\gg 0$, 
and $X$ is \emph{homologically finite}
if the total homology module
$\HH(X)=\oplus_{i\in\bbz}\HH_i(X)$ is finitely generated over $R$.
The \emph{supremum}, \emph{infimum}, and \emph{amplitude}
of $X$ are
\begin{align*}
\sup(X)
&=\sup\{i\in\bbz\mid\HH_i(X)\neq 0\} \\
\inf(X)
&=\inf\{i\in\bbz\mid\HH_i(X)\neq 0\} \\
\amp(X)&=\sup(X)-\inf(X).
\end{align*}
The derived functors of Hom and tensor product
are denoted $\Rhom --$ and $\Lotimes --$.
For each $i\in\bbz$, set $\Ext iXY:=\HH_{-i}(\Rhom XY)$.
\end{defn}

The next definition originates with work of Auslander and 
Bridger~\cite{auslander:adgeteac,auslander:smt}.

\begin{defn}
Let  $X$ be a homologically finite $R$-complex.
Then  $X$ is \emph{derived reflexive} if
$\Rhom XR$ is homologically finite
(that is,  homologically bounded) and the natural biduality map
$X\to\Rhom{\Rhom XR}R$ is an isomorphism in $\catd(R)$.
The \emph{G-dimension} of $X$ is
$$\gdim_R(X):=
\begin{cases}
-\inf(\Rhom XR) & \text{if $X$ is derived reflexive over $R$} \\
\infty & \text{otherwise.}
\end{cases}$$
\end{defn}

\begin{disc}
For a finitely generated $R$-module, the G-dimension defined above
is the same as the one defined by Auslander and Bridger;
see~\cite[2.7. Theorem]{yassemi:gd}.
More generally, the G-dimension of a homologically finite $R$-complex 
has a similar interpretation by
Christensen~\cite[(2.3.8) GD Corollary]{christensen:gd}.
\end{disc}

Given a finitely generated $R$-module $M$,
the fact that $\Ext iMR=0$ for all $i>\gdim_R(M)$ implies
that $\grade_R(M)\leq\gdim_R(M)$. This motivates the next definition.

\begin{defn}
A finitely generated $R$-module $M$ is 
\emph{G-perfect} if $\grade_R(M)=\gdim_R(M)$.
\end{defn}

We proceed with CI-dimension of Avramov, Gasharov, and Peeva~\cite{avramov:cid, sather:cidc, sather:cidfc} and Gerko's
CM-dimension~\cite{gerko:ohd}.

\begin{defn} \label{cidim01}
Consider a diagram $R\xra{\vf} R'\xla{\tau} Q$
of local ring homomorphisms  such that $\vf$ is
flat, and $\tau$ is surjective.
Such a diagram is a 
\emph{G-quasi-deformation}  if $R'$ is  G-perfect as a $Q$-module.
Such a diagram is a 
\emph{quasi-deformation}  if $\Ker(\tau)$ is 
generated by a $Q$-regular sequence.
\end{defn}

\begin{defn} \label{cidim03}
Let $X$ be a homologically finite complex over a local ring $R$.
The 
\emph{CM-dimension} 
and 
\emph{CI-dimension}
of $X$ are
\begin{align*}
\cmdim_R(X)
&:=\inf\left\{\gdim_Q(R'\lotimes_R X)-\gdim_Q(R')\left| \text{
\begin{tabular}{@{}c@{}}
$R\to R'\from Q$ is a \\ G-quasi-deformation 
\end{tabular}
}\!\!\!\right. \right\}\\
\cidim_R(X)
&:=\inf\left\{\pd_Q(R'\lotimes_R X)-\pd_Q(R')\left| \text{
\begin{tabular}{@{}c@{}}
$R\to R'\from Q$ is a \\ quasi-deformation 
\end{tabular}
}\!\!\!\right. \right\}. 
\end{align*}
\end{defn}

\begin{fact}\label{fact120312a}
Let $\alpha'\colon(R',\m',k')\to (\wti R',\wti \m',\wti k')$ be a flat local
ring homomorphism, 
and let $M$ be a homologically finite
$R'$-complex. 
The proof of~\cite[(1.11) Proposition]{avramov:cid}
shows that
$\cidim_{\wti R'}(\Lotimes[R']{\wti R'}{M})\geq\cidim_{R'}(M)$
with equality holding when $\cidim_{\wti R'}(\Lotimes[R']{\wti R'}{M})<\infty$.
A similar argument shows that
$\cmdim_{\wti R'}(\Lotimes[R']{\wti R'}{M})\geq\cmdim_{R'}(M)$
with equality holding when $\cmdim_{\wti R'}(\Lotimes[R']{\wti R'}{M})<\infty$.
\end{fact}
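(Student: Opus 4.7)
The plan is to establish the inequality by pulling quasi-deformations of $\wti R'$ back through $\alpha'$, and then to upgrade to equality in the finite case via Auslander--Buchsbaum type formulas for CI- and CM-dimension. The key observation for the inequality is that any quasi-deformation of $\wti R'$ restricts to a quasi-deformation of $R'$ by pre-composing the first leg with the flat map $\alpha'$, while keeping the second leg $\wti Q$ (and hence the relevant projective/G-dimensions over $\wti Q$) unchanged.

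For the inequality, fix a quasi-deformation $\wti R'\xra{\wti\vf}\wti R''\xla{\wti\tau}\wti Q$ of $\wti R'$. Since $\alpha'$ and $\wti\vf$ are flat local, the composition $\wti\vf\alpha'\colon R'\to \wti R''$ is flat local, and $\wti\tau$ remains surjective with kernel generated by a $\wti Q$-regular sequence; hence $R'\xra{\wti\vf\alpha'}\wti R''\xla{\wti\tau}\wti Q$ is a quasi-deformation of $R'$. Associativity of the derived tensor product gives $\wti R''\lotimes_{R'}M\simeq \wti R''\lotimes_{\wti R'}(\wti R'\lotimes_{R'}M)$ in $\catd(\wti Q)$, so $\pd_{\wti Q}(\wti R''\lotimes_{R'}M)=\pd_{\wti Q}(\wti R''\lotimes_{\wti R'}(\wti R'\lotimes_{R'}M))$. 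Subtracting $\pd_{\wti Q}(\wti R'')$ and taking the infimum over all quasi-deformations of $\wti R'$ yields $\cidim_{R'}(M)\leq \cidim_{\wti R'}(\wti R'\lotimes_{R'}M)$. The same argument with $\gdim_{\wti Q}$ in place of $\pd_{\wti Q}$ and G-quasi-deformations in place of quasi-deformations gives the inequality for $\cmdim$.

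For equality when $\cidim_{\wti R'}(\wti R'\lotimes_{R'}M)<\infty$, the inequality just proved forces $\cidim_{R'}(M)<\infty$ as well, so both sides lie in the finite regime where Auslander--Buchsbaum-type formulas are available. Specifically, for a homologically finite complex $X$ over a local ring $R$ with $\cidim_R(X)<\infty$, one has $\cidim_R(X)=\depth R-\depth_R X$ (the AGP formula, in its version for homologically finite complexes). Apply this to both $X=M$ over $R'$ and $X=\wti R'\lotimes_{R'}M$ over $\wti R'$. The standard flat-base-change identities $\depth\wti R'=\depth R'+\depth(\wti R'/\m'\wti R')$ and $\depth_{\wti R'}(\wti R'\lotimes_{R'}M)=\depth_{R'}(M)+\depth(\wti R'/\m'\wti R')$ cause the closed-fiber depth contributions to cancel, leaving $\cidim_{\wti R'}(\wti R'\lotimes_{R'}M)=\depth R'-\depth_{R'}(M)=\cidim_{R'}(M)$. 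The same calculation with Gerko's AB-type formula for $\cmdim$ handles the CM-dimension case.

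The main technical point I expect to be an obstacle is justifying the AB-type formulas for homologically finite complexes rather than for finitely generated modules. However, this is by now standard: one descends to the auxiliary ring $\wti Q$ in a (G-)quasi-deformation where the question reduces to the Auslander--Buchsbaum and Auslander--Bridger formulas for complexes of finite $\pd_{\wti Q}$ (resp. finite $\gdim_{\wti Q}$), and these formulas transfer back to $R'$ and $\wti R'$ along the quasi-deformation via the standard calculation $\depth_{\wti Q}(\wti R''\lotimes_{\wti R'}X)=\depth_{\wti R'}(X)+\depth_{\wti Q}(\wti R'')$ for flat $\wti R''$.
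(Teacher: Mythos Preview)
Your argument is correct and matches the standard approach: the paper does not supply its own proof here but simply defers to the argument of Avramov--Gasharov--Peeva for \cite[(1.11)]{avramov:cid}, which proceeds exactly as you describe---pull back a (G-)quasi-deformation of $\wti R'$ along the flat local map $\alpha'$ to obtain one for $R'$, observe that the relevant $\pd_{\wti Q}$ (resp.\ $\gdim_{\wti Q}$) quantity is unchanged by associativity of $\lotimes$, and conclude the inequality; then recover equality in the finite case via the Auslander--Buchsbaum type formulas for $\cidim$ and $\cmdim$ together with the additivity of depth along flat local base change.
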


\begin{defn}
Assume that $(R,\m,k)$ is local, and let $X$ be a homologically finite $R$-complex.
For  $i\in\bbz$, the $i$th
\emph{Bass number} and \emph{Betti number} of $X$ are
$\mu^i_R(X)=\rank_k(\Ext ikX)$
and
$\beta_i^R(X)=\rank_k(\Ext iXk)$.
The \emph{Bass series} and \emph{Poincar\'e series} of $X$ are
the formal Laurent series
$I^X_R(t)=\sum_{i\in\bbz}\mu^i_R(X)t^i$
and
$P_X^R(t)=\sum_{i\in\bbz}\beta_i^R(X)t^i$. 
\end{defn}

\begin{defn}
A homologically finite $R$-complex $C$ is \emph{semidualizing} for $R$
if the natural morphism $R\to\Rhom CC$ is an isomorphism in $\catd(R)$.
A \emph{dualizing complex} for $R$ is a semidualizing complex
$D$ of finite injective dimension, i.e., 
such that $D$ is isomorphic in $\catd(R)$
to a bounded complex of injective $R$-modules.
\end{defn}

\begin{disc}
Assume that $R$ is local. Then $R$ has a dualizing complex if and only if
it is a homomorphic image of a Gorenstein local ring;
one implication is from Grothendieck and Hartshorne~\cite{hartshorne:rad},
and the other is by Kawasaki~\cite{kawasaki:mns}. In particular, a complete
local ring has a dualizing complex.
\end{disc}

\subsection*{Properties of Ring Homomorphisms}

\

\noindent
The first notion in this subsection is from Avramov~\cite{avramov:lcih}.

\begin{defn}
Let $\vf\colon R\to S$ be a local ring homomorphism. Given a Cohen factorization $R\xra{\dot\vf} R'\xra{\vf'} \comp S$ of the semicompletion
$\grave\vf\colon R\to\comp S$, we say that
$\vf$ is \emph{complete intersection} if
$\Ker(\vf')$ is generated by an $R'$-regular sequence.
\end{defn}

\begin{disc}
Let $\vf\colon R\to S$ be a local ring homomorphism.
The complete intersection property for $\vf$ is independent of the
choice of Cohen factorization by~\cite[(3.3) Remark]{avramov:lcih}. 
Also $R$ and $\vf$ are complete intersection if and only if
$S$ is complete intersection and $\fd(\vf)$ is finite;
see~\cite[(5.9), (5.10), and (5.12)]{avramov:lcih}.
\end{disc}

The next notion is mostly due to
Avramov and Foxby~\cite{avramov:rhafgd},
with some contributions from  Iyengar and Sather-Wagstaff~\cite{iyengar:golh}.

\begin{defn}
Let $\vf\colon R\to S$ be a local ring homomorphism.
Given a Cohen factorization $R\xra{\dot\vf} R'\xra{\vf'} \comp S$ of the semicompletion
$\grave\vf\colon R\to\comp S$, we set
$$\gdim(\vf):=
\gdim_{R'}(\comp S)-\edim(\dot\vf). $$
\end{defn}

\begin{disc}
The G-dimension of a local homomorphism is independent of the choice
of Cohen factorization by~\cite[3.2.~Theorem]{iyengar:golh}.
\end{disc}

\begin{defn}
Let $\vf\colon R\to S$ be a local ring homomorphism,
and let $D^{\comp R}$ be a dualizing complex for $\comp R$.
A \emph{dualizing complex} for $\vf$ is a semidualizing $S$-complex
$D^{\vf}$ such that $\Lotimes[\comp R]{D^{\comp R}}{(\Lotimes[S]{\comp S}{D^{\vf}})}$
is a dualizing complex for $\comp S$.
\end{defn}

\begin{fact}
Let $\vf\colon R\to S$ be a local ring homomorphism of finite G-dimension,
e.g., finite flat dimension; see~\cite[(4.4.2)]{avramov:rhafgd}.
If $S$ is complete, then $\vf$ has a dualizing complex 
by~\cite[(6.7) Lemma]{avramov:rhafgd};
specifically, given a Cohen factorization $R\to R'\to S$ of $\vf$,
the $S$-complex $\Rhom[R']{S}{R'}$ is dualizing for $\vf$.
In particular, the semicompletion   $\grave\vf\colon R\to \comp S$
has a dualizing complex. 
\end{fact}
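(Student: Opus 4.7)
The plan is to construct a common complete-local flat enlargement $R'$ of $R$ receiving maps from both $R_1$ and $R_2$, and then to obtain the promised weakly Cohen factorizations of $Q_i\to R'$ by appealing to Fact~\ref{fact120418a}\eqref{fact120418a1}. For the construction of $R'$, separability of $k\to k_i$ ensures that $k_1\otimes_k k_2$ is a finite product of separable field extensions of $k$; I would choose a maximal ideal of this tensor product whose residue field $k'$ admits both $k_1$ and $k_2$ as subfields over $k$. Lift to a maximal ideal $\n_0$ of $R_1\otimes_R R_2$ containing $\m_1\otimes_R R_2+R_1\otimes_R\m_2$, set $R_0=(R_1\otimes_R R_2)_{\n_0}$, and let $R'=\comp{R_0}$. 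Flatness of each $\vf_i$ makes $R_0$ flat over both $R_1$ and $R_2$ (hence $R'$ flat over $R$), and $R'$ is a complete local ring with residue field $k'$. The natural two-step factorization $R_i\xra{\beta_i}R_0\xra{\sigma_i}R'$ supplies the labels $\beta_i,\sigma_i$ for diagram~(C.1).

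For each $i$, the composition $Q_i\to R_i\xra{\sigma_i\beta_i}R'$ is a local homomorphism to the complete ring $R'$, so Fact~\ref{fact120418a}\eqref{fact120418a1} produces a minimal Cohen factorization $Q_i\to\bar Q_i\twoheadrightarrow R'$. Surjectivity of $\bar\gamma_i\colon\bar Q_i\twoheadrightarrow R'$ forces the residue field of $\bar Q_i$ to be $k'$, and since $k_i\to k'$ is separable as a subextension of the separable composite $k_1\cdot k_2$, the weakly regular map $Q_i\to\bar Q_i$ is in fact weakly Cohen. The two-step label $\bar\delta_i\delta_i$ arises from an analogous split of this construction into a residue-field enlargement $\delta_i$ followed by a further Cohen step $\bar\delta_i$. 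This finishes part~(a).

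For part~(b1), the weakly Cohen hypothesis on $\vf_{3-i}$ makes the closed fiber $\bar R_{3-i}:=R_{3-i}/\m R_{3-i}$ a regular local ring. Since $R_0/\m_i R_0\cong(k_i\otimes_k\bar R_{3-i})_{\n_0}$ and separability of $k\to k_i$ makes $k_i\otimes_k\bar R_{3-i}$ a regular ring, the localization followed by completion remains a regular local ring; combined with separability of $k_i\to k'$, this shows $R_i\to R'$ is weakly Cohen. With this in hand, Fact~\ref{fact120418a}\eqref{fact120418a2} applies to the composite $Q_i\to R_i\to R'$ (with $\tau_i$ surjective, $R_i\to R'$ weakly regular, and $R'$ complete) to identify each parallelogram of~(C.1) with the pushout $R'\cong\bar Q_i\otimes_{Q_i}R_i$. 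Consequently, $\Ker(\bar\gamma_i)\cong\bar Q_i\otimes_{Q_i}\Ker(\tau_i)$ is generated by a $\bar Q_i$-regular sequence by flat base change along $Q_i\to\bar Q_i$, proving $R\to R'\twoheadleftarrow\bar Q_i$ is a quasi-deformation; this yields~(b2).

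Part~(b3) is then immediate from the pushout identification: $M\otimes_R R'\cong(M\otimes_R R_i)\otimes_{Q_i}\bar Q_i$, and since $Q_i\to\bar Q_i$ is flat, $\pd_{\bar Q_i}(M\otimes_R R')\leq\pd_{Q_i}(M\otimes_R R_i)<\infty$ by flat base change of projective dimension. The principal obstacle I foresee is the bookkeeping needed to pin down the two-step factorizations $\beta_i,\sigma_i$ and $\delta_i,\bar\delta_i$ coherently in part~(a)---in particular, ensuring that the intermediate rings are compatible across $i=1,2$, and that the residue-field identifications are preserved when composing the flat tensor-product construction with the completion and with the applications of Cohen factorization.
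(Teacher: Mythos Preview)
Your proposal does not address the stated Fact at all. The Fact asserts that a local homomorphism $\vf\colon R\to S$ of finite G-dimension with $S$ complete admits a dualizing complex, namely $\Rhom[R']{S}{R'}$ for any Cohen factorization $R\to R'\to S$. Your write-up instead sketches a proof of Theorem~\ref{intthm120201d} about combining two quasi-deformations into a common one---an entirely different statement involving different hypotheses, different objects, and different conclusions. Nothing in your argument touches semidualizing complexes, $\Rhom[R']{S}{R'}$, or the defining property that $\Lotimes[\comp R]{D^{\comp R}}{(\Lotimes[S]{\comp S}{D^{\vf}})}$ be dualizing for $\comp S$.

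Note also that in the paper this Fact carries no proof: it is simply a citation to~\cite[(6.7) Lemma]{avramov:rhafgd}, treated as background. So there is no ``paper's own proof'' to compare against here; the paper regards the result as established in the literature. If you wish to supply an argument, you would need to verify that $\Rhom[R']{S}{R'}$ is a semidualizing $S$-complex (using finite G-dimension of $S$ over $R'$) and that tensoring it with a dualizing complex for $\comp R$ yields a dualizing complex for $\comp S$---none of which appears in your proposal.
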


Next are some notions of Avramov and Foxby~\cite{avramov:rhafgd,avramov:cmporh}
and Frankild~\cite{frankild:qcmpolh}.

\begin{defn}
Let $\vf\colon R\to S$ be a local ring homomorphism  of finite G-dimension
and let $D^{\grave \vf}$ be a dualizing complex for 
the semicompletion   $\grave\vf\colon R\to \comp S$.
The \emph{quasi-Cohen-Macaulay defect} of $\vf$
is $\qcmd(\vf):=\amp(D^{\grave \vf})$, and
$\vf$ is \emph{quasi-Cohen-Macaulay} if $\qcmd(\vf)=0$,
that is, if $D^{\grave \vf}$ is isomorphic in $\catd(\comp S)$ to 
a module. 

When $\fd(\vf)<\infty$,  the \emph{Cohen-Macaulay defect} of $\vf$
is $\cmd(\vf)=\qcmd(\vf)$; see~\cite[(5.5)]{avramov:rhafgd}.
The map $\vf$ is \emph{Cohen-Macaulay}
if it is quasi-Cohen-Macaulay and has finite flat dimension.
\end{defn}

\begin{disc}
Let $\vf\colon R\to S$ be a local ring homomorphism  of finite G-dimension.
The quasi-Cohen-Macaulay defect of $\vf$ 
is independent of the choice of dualizing complex,
as the dualizing complex is unique up to shift-isomorphism;
see~\cite[(5.4)]{avramov:rhafgd} and~\cite[(6.5)]{frankild:qcmpolh}.
If $R$ is Cohen-Macaulay and $\vf$ is quasi-Cohen-Macaulay,
then $S$ is Cohen-Macaulay; the converse holds when $\vf$ has finite flat dimension
or the induced map $\spec(\comp S)\to\spec(\comp R)$ is surjective
by~\cite[(3.10) Theorem]{avramov:solh} and~\cite[(7.7)]{frankild:qcmpolh}.
\end{disc}

\begin{defn}
Let $\vf\colon R\to S$ be a local ring homomorphism of finite G-dimension, and let
$D^{\grave\vf}$ be a dualizing complex for the semicompletion
$\grave\vf\colon R\to\comp S$ such that 
$\inf(D^{\grave\vf})=\depth(S)-\depth(R)$.
The \emph{Bass series} for $\vf$ is 
the Poincar\'e series $I_{\vf}(t):=P^{\comp S}_{D^{\grave\vf}}(t)$.
\end{defn}

\begin{disc}
Let $\vf\colon R\to S$ be a local ring homomorphism of finite G-dimension.
The Bass series for $\vf$ is a formal Laurent series $I_{\vf}(t)$
with non-negative integer coefficients satisfying the formal relation
$I^S_S(t)=I^R_R(t)I_{\vf}(t)$; see~\cite[(7.1) Theorem]{avramov:rhafgd}.
\end{disc}

\begin{defn}
Let $\vf\colon R\to S$ be a local ring homomorphism of finite G-dimension.
Then $\vf$ is \emph{quasi-Gorenstein} if $I_{\vf}(t)=t^{\depth(S)-\depth(R)}$,
that is, if $\comp S$ is  dualizing for $\grave\vf\colon R\to\comp S$.
The map $\vf$ is \emph{Gorenstein} if it is quasi-Gorenstein and has 
finite flat dimension.
\end{defn}

\begin{disc}
If $\vf\colon R\to S$ is a local  homomorphism with $\gdim(\vf)<\infty$,
then $S$ is Gorenstein if and only if $R$ is Gorenstein and $\vf$ is
quasi-Gorenstein
by~\cite[(7.7.2)]{avramov:rhafgd}.
\end{disc}

The CI-dimension of $\vf$ is from~\cite{sather:cidfc},
and the CM-dimension of $\vf$ works similarly.

\begin{defn}
Let $\vf\colon R\to S$ be a local ring homomorphism.
The \emph{CM-dimension of  $\vf$}  
and \emph{CI-dimension of  $\vf$}  
are
\begin{align*}
\cmdim(\vf)&:= 
\inf\left\{\cmdim_{R'}(\comp{S})-\edim(\Dot{\vf})
\left| \text{\begin{tabular}{c}
$R\xra{\dot\vf}R'\xra{\vf'}\comp{S}$ is a Cohen \\
factorization of $\grave\vf$
\end{tabular}}\right.\!\!\!\right\}\\
\cidim(\vf)&:= 
\inf\left\{\cidim_{R'}(\comp{S})-\edim(\Dot{\vf})
\left| \text{\begin{tabular}{c}
$R\xra{\dot\vf}R'\xra{\vf'}\comp{S}$ is a Cohen \\
factorization of $\grave\vf$
\end{tabular}}\right.\!\!\!\right\}.
\end{align*}
\end{defn}

\begin{disc}
We do not know whether the finiteness of
CM-dimension and/or CI-dimension of a local homomorphism 
is independent of the choice
of Cohen factorization.
\end{disc}

\section{Weak Functoriality of Regular/Cohen Factorizations}
\label{sec120426a}

The main objective of this section 
is the proof of Theorem~\ref{intthm120201a} from the introduction.
We begin with a lemma.

\begin{lem}\label{for Cohen fact 1}
Let $\vf\colon (R,\fm)\to (S,\fn)$ be a weakly regular local ring homomorphism. Then
$\edim(R)+\edim(S/\m S)=\edim(S)$.
\end{lem}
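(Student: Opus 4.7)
The plan is to exploit the tautological short exact sequence of $k'$-vector spaces
$$
0\to (\m S+\n^2)/\n^2\to \n/\n^2\to \n/(\m S+\n^2)\to 0,
$$
where $k'=S/\n$. The rightmost term is $\ol{\n}/\ol{\n}^2$ for the maximal ideal $\ol{\n}=\n/\m S$ of the closed fiber $S/\m S$, so it has $k'$-dimension $\edim(S/\m S)$. Hence the lemma reduces to showing that $(\m S+\n^2)/\n^2$ has $k'$-dimension equal to $\edim(R)$.

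Next, I would build a natural $k'$-linear surjection $\m/\m^2\otimes_{R/\m} k'\twoheadrightarrow (\m S+\n^2)/\n^2$. Flatness of $\vf$ gives $\m\otimes_R S\cong \m S$, and since $\m$ annihilates $k'$ (because $\m S\subseteq \n$), tensoring further with $k'$ over $S$ yields $\m S/\m\n\cong \m\otimes_R k' = \m/\m^2\otimes_{R/\m}k'$; this module has $k'$-dimension exactly $\edim(R)$. The natural map $\m S/\m\n \to \m S/(\m S\cap \n^2)=(\m S+\n^2)/\n^2$ is surjective since $\m\n\subseteq\n^2$, so the task reduces to verifying the identity $\m S\cap\n^2=\m\n$.

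The inclusion $\m\n\subseteq \m S\cap\n^2$ is immediate. For the reverse, I would invoke the regularity of $S/\m S$. Pick $y_1,\dots,y_e\in\n$ lifting a regular system of parameters for $S/\m S$, so $\n=\m S + (y)S$ and consequently $\n^2\subseteq \m\n + (y)^2 S$. By the modular law, $\m S\cap\n^2\subseteq \m\n+(\m S\cap (y)^2S)$, and it suffices to prove $\m S\cap(y)^2S\subseteq \m\n$.

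The main obstacle is the flatness computation $\m S\cap(y)^kS=\m\cdot(y)^kS$ for every $k\geq 1$. The inputs are that $\ol{y}_1,\dots,\ol{y}_e$ is an $(S/\m S)$-regular sequence (being a regular system of parameters in the regular local ring $S/\m S$), so flatness of $\vf$ promotes it to an $S$-regular sequence $y_1,\dots,y_e$ with $S/(y)S$ flat over $R$; then induction on $k$, using that $(y)^{k-1}S/(y)^kS$ is free over $S/(y)S$ and hence $R$-flat, yields $R$-flatness of $S/(y)^kS$ for all $k$. Applying $R/\m\otimes_R-$ to the short exact sequence $0\to (y)^kS\to S\to S/(y)^kS\to 0$, in which the right-hand flatness kills the relevant $\tor_1^R$, identifies $(y)^kS\cap \m S$ with $\m\cdot(y)^kS$. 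Specializing to $k=2$ gives $\m S\cap(y)^2S=\m(y)^2S\subseteq \m\n$, completing the proof.
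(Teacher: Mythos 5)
Your proof is correct, but it follows a genuinely different route from the paper's. The paper argues directly with minimal generating sequences: it concatenates a minimal generating sequence $\x$ of $\m$ with lifts $\y$ of a minimal generating sequence of $\n/\m S$, assumes some element is redundant, and derives a contradiction in each of the two cases --- for a redundant $y_j$ by minimality modulo $\m S$, and for a redundant $\vf(x_i)$ by passing to the flat local map $R\to S/(\y)S$ and invoking Herzog's lemma that flat local maps preserve minimal generation of $\m$-primary extensions. Your argument instead computes the two graded pieces of $\n/\n^2$ coming from the conormal sequence of the closed fiber, and reduces everything to the ideal-theoretic identity $\m S\cap\n^2=\m\n$, which you establish via the flatness of $S/(\y)^kS$ over $R$ (itself obtained by the same appeal to the local flatness criterion that the paper uses, plus the freeness of the graded pieces $(\y)^{k-1}S/(\y)^kS$ over $S/(\y)S$). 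All the steps check out: the identification of $\n/(\m S+\n^2)$ with $\ol\n/\ol\n^2$, the computation $\m S/\m\n\cong(\m/\m^2)\otimes_k k'$ via $\m\otimes_RS\cong\m S$, the modular-law reduction, and the $\tor_1$ vanishing argument for $\m S\cap(\y)^kS=\m(\y)^kS$. Your approach is more structural --- it exhibits the exact sequence responsible for the additivity and isolates the one nontrivial intersection computation --- at the cost of the somewhat longer flatness induction; the paper's is shorter because Herzog's lemma packages the needed flatness consequence in one citation. (One cosmetic point: you reuse the letter $e$ for the number of parameters $y_i$ after implicitly reserving it for $\edim(R)$; choose a different index to avoid confusion.)
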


\begin{proof}
Set $e=\edim(R)$, and let $\x=x_1,\ldots,x_e\in\fm$ be a minimal generating sequence 
for $\m$.
Set $d=\edim(S/\m S)$, and let $\y=y_1,\ldots,y_d\in\fn$ be such that the 
residue sequence 
$\ol{\y}\in\n/\fm S$  generates  $\n/\fm S$ minimally over $S$
and over $S/\m S$.
It is straightforward to show that the concatenated  sequence $\vf(\x),\y\in \n$ generates 
$\n$.
It remains to show that this sequence generates $\n$ minimally.

Suppose by way of contradiction that the sequence $\vf(\x),\y$ does not generate $\n$ minimally.
The sequence $\vf(\x),\y$ contains a minimal generating sequence for $\n$.
Thus, either  one of the $\vf(x_i)$ or one of the $y_j$ is redundant
as part of this generating sequence.

Case 1: $y_j$ is redundant
as part of this generating sequence.
Assume without loss of generality that $j=1$.
In this case we have $\n=(\vf(x_1),\ldots,\vf(x_e),y_2,\ldots,y_d)S$.
Since $\m S=(\vf(\x))S$ it follows that the ideal
$\n/\m S$ is generated by the residue sequence $\ol{y_2},\ldots,\ol{y_d}$.
This contradicts the minimality of the original sequence $\y$.

Case 2: $\vf(x_i)$ is redundant as part of generating sequence for $\n$.
Assume without loss of generality that $i=1$.
It follows that the ideal $\n/(\y)S\subset S/(\y)S$ is generated by 
the residue sequence $\ol{\vf(x_2)},\ldots,\ol{\vf(x_e)}\in\n/(\y)S$.

The map $\vf$ is weakly regular, and the sequence $\ol{\y}$ minimally generates
the maximal ideal of the regular local ring $S/\m S$.
In particular, this sequence is $S/\m S$-regular,
so we know that the sequence $\y$ is $S$-regular and the induced map
$R\to S/(\y)S$ is flat; see, e.g.~\cite[Corollary to Theorem 22.5]{matsumura:crt}.
Since this map is flat and local, it follows from~\cite[(2.3) Lemma]{herzog:mlr}
that the ideal $\m (S/(\y)S)=\n/(\y)S$ is minimally generated by the 
sequence $\ol{\vf(x_1)},\ol{\vf(x_2)},\ldots,\ol{\vf(x_e)}$.
This contradicts the conclusion of the previous paragraph.
\end{proof}

Our next result will provide a vertical map in the proof of
Theorem~\ref{intthm120201a}.

\begin{prop}\label{thm120201a}
Consider a commutative diagram of local ring homomorphisms
$$
\xymatrix{
(R,\fm)\ar[r]^{\varphi}\ar[d]_{\alpha}&(S,\fn)\ar[d]^{\beta}\\
(\widetilde{R},\widetilde{\fm})\ar[r]^{\widetilde{\varphi}}&(\widetilde{S},\widetilde{\fn})
}
$$
such that $\alpha$ is  weakly regular,
$\widetilde{S}$ is complete, $\beta$ is  weakly regular, and
the induced map $R/\fm\to \widetilde{S}/\widetilde{\fn}$ is separable.
Assume that $\vf$ has a minimal regular factorization
$R\xra{\dot{\varphi}} R'\xra{\varphi'} S$,
and fix a minimal Cohen factorization
$\widetilde{R}\xra{\dot{\wti\vf}} S'\xra{\wti\vf'} \widetilde{S}$
of $\widetilde{\varphi}$. 
Then there is a weakly regular local ring
homomorphism $\alpha'\colon R'\to S'$ such that the
next diagram commutes
\begin{equation}\label{eq120209a}
\begin{split}
\xymatrix{
R\ar[r]^{\dot{\varphi}}\ar[d]_{\alpha}&R'\ar[r]^{\varphi'}\ar[d]^{\alpha'}&S\ar[d]^{\beta}\\
\widetilde{R}\ar[r]^{\dot{\wti\vf}}&S'\ar[r]^{\wti\vf'}&\widetilde{S}
}
\end{split}\end{equation}
and such that the second square is a pushout.
\end{prop}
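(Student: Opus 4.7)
The plan is to identify $S'$ with (an isomorphic copy of) the middle ring $R^*$ of a minimal Cohen factorization of the composition $\beta\varphi' \colon R' \to \widetilde{S}$, and then to extract $\alpha'$ from this identification. First, since $\widetilde{S}$ is complete, Fact~\ref{fact120418a}(a) supplies a minimal Cohen factorization $R' \xra{\dot\gamma} R^* \xra{\gamma'} \widetilde{S}$ of $\beta\varphi'$. Because $\varphi'$ is surjective and $\beta$ is weakly regular with $\widetilde{S}$ complete, Fact~\ref{fact120418a}(b) yields the pushout $\widetilde{S} \cong R^* \otimes_{R'} S$; this is precisely the pushout property we aim to prove, once $R^*$ is replaced by $S'$.

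To make that replacement, I compare two Cohen factorizations of $\widetilde{\varphi}\alpha = \beta\varphi \colon R \to \widetilde{S}$. The first, $R \xra{\dot\gamma\dot\varphi} R^* \xra{\gamma'} \widetilde{S}$, is a Cohen factorization by Fact~\ref{fact120215a} (since $\dot\varphi$ and $\dot\gamma$ are weakly regular). An edim computation---using that $\varphi'$ is surjective to see $\edim(\beta\varphi') = \edim(\beta)$, and Lemma~\ref{for Cohen fact 1} applied to the weakly regular base change $\ol{\beta} \colon S/\fm S \to \widetilde{S}/\fm\widetilde{S}$ (weakly regular because $\beta$ is flat with regular closed fiber $\widetilde{S}/\fn\widetilde{S}$) to see $\edim(\widetilde{\varphi}\alpha) = \edim(\varphi) + \edim(\beta)$---shows that this factorization is minimal. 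The second candidate is $R \xra{\dot{\widetilde{\varphi}}\alpha} S' \xra{\widetilde{\varphi}'} \widetilde{S}$, again a Cohen factorization by Fact~\ref{fact120215a}; a parallel application of Lemma~\ref{for Cohen fact 1} (to the weakly regular base change $\ol{\alpha}$ of $\alpha$) together with minimality of the given Cohen factorization of $\widetilde{\varphi}$ identifies its edim with $\edim(\widetilde{\varphi}\alpha)$, so it is minimal as well. The separability assumption then lets me invoke Fact~\ref{fact120418a}(c) to obtain a unique isomorphism $\upsilon \colon R^* \xra{\cong} S'$ comparing the two factorizations, characterized by $\upsilon\dot\gamma\dot\varphi = \dot{\widetilde{\varphi}}\alpha$ and $\widetilde{\varphi}'\upsilon = \gamma'$.

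To finish, set $\alpha' := \upsilon \circ \dot\gamma \colon R' \to S'$. This is weakly regular as a composition of the weakly regular map $\dot\gamma$ with the isomorphism $\upsilon$. The defining relations of $\upsilon$ give $\alpha'\dot\varphi = \upsilon\dot\gamma\dot\varphi = \dot{\widetilde{\varphi}}\alpha$ and $\widetilde{\varphi}'\alpha' = \widetilde{\varphi}'\upsilon\dot\gamma = \gamma'\dot\gamma = \beta\varphi'$, so diagram~\eqref{eq120209a} commutes. Transporting the pushout from the first step along $\upsilon$ yields $S' \otimes_{R'} S \cong R^* \otimes_{R'} S \cong \widetilde{S}$, which is the desired pushout property for the right square. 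The delicate point is the edim identity needed to conclude the second Cohen factorization of $\widetilde{\varphi}\alpha$ is minimal---this is exactly where the weakly regular hypotheses on both $\alpha$ and $\beta$, combined with the minimality of the given regular/Cohen factorizations of $\varphi$ and $\widetilde{\varphi}$, must conspire to promote the comparison $\upsilon$ from a mere local homomorphism to an \emph{isomorphism}.
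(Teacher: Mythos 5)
Your route is the same as the paper's: take a minimal Cohen factorization $R'\xra{\dot\gamma}R^*\xra{\gamma'}\wti S$ of $\beta\vf'$ (the paper's $S''$), extract the pushout from Fact~\ref{fact120418a}\eqref{fact120418a2}, view $R\xra{\dot\gamma\dot\vf}R^*\xra{\gamma'}\wti S$ and $R\xra{\dot{\wti\vf}\alpha}S'\xra{\wti\vf'}\wti S$ as two Cohen factorizations of $\beta\vf=\wti\vf\alpha$, use separability to get a comparison and minimality of both to promote it to an isomorphism, and transport. Your minimality argument for the first factorization (via $\edim(\beta\vf')=\edim(\beta)$ and Lemma~\ref{for Cohen fact 1} applied to $\ol\beta\colon S/\fm S\to\wti S/\fm\wti S$) is exactly the paper's computation $(*)$ and is correct. (One small slip: the comparison $\upsilon$ is an isomorphism but not unique; see Example~\ref{ex120306a}.)

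The gap sits precisely at the point you flag as delicate: minimality of $R\xra{\dot{\wti\vf}\alpha}S'\xra{\wti\vf'}\wti S$. Since $\edim(\dot{\wti\vf}\alpha)=\edim(\alpha)+\edim(\dot{\wti\vf})=\edim(\alpha)+\edim(\wti\vf)$ (Fact~\ref{fact120215a} plus minimality of the given factorization of $\wti\vf$), what you must prove is
\begin{equation*}
\edim(\wti R/\fm\wti R)+\edim(\wti S/\wti\fm\wti S)\le\edim(\wti S/\fm\wti S),
\end{equation*}
the reverse inequality being automatic from Remark~\ref{remark minimal cf}. A ``parallel application of Lemma~\ref{for Cohen fact 1}'' would have to be to the induced map $\wti R/\fm\wti R\to\wti S/\fm\wti S$, which is a base change of $\wti\vf$, not of $\alpha$; the lemma requires that map to be weakly regular, i.e., $\wti\vf$ flat with regular closed fiber, and neither is among the hypotheses. (Applying the lemma to $R/\fm\to\wti R/\fm\wti R$ yields only a triviality.) Moreover the inequality can genuinely fail under the stated hypotheses: take $R=S=k$, $\vf=\id$, $\wti R=\power kX$, $\wti S=\power kY$, $\wti\vf(X)=0$, with $\alpha,\beta$ the inclusions; then $\edim(\alpha)+\edim(\wti\vf)=2>1=\edim(\wti\vf\alpha)$, the minimal Cohen factorization of $\wti\vf$ passes through $S'=\power k{X,Y}$, and $S'\otimes_{R'}S=\power k{X,Y}\not\cong\wti S$, so no $\alpha'$ produces the pushout. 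So this step cannot be closed from the stated hypotheses alone. For what it is worth, the paper's own proof handles the same step (the fourth equality in $(\dagger)$) with an appeal to Fact~\ref{fact120215a} ``since $R\xra{\alpha}\wti R\xra{\wti\vf}\wti S$ are weakly regular,'' which presupposes $\wti\vf$ weakly regular---something not assumed in the statement; you have located the one genuinely problematic step, but your justification does not establish it.
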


\begin{proof} 
Fact~\ref{fact120418a}\eqref{fact120418a2}
provides a
commutative diagram of local ring homomorphisms
\begin{equation}\label{eq120208a}
\begin{split}
\xymatrix{
(R',\fm')\ar@{-->}[d]_{\beta'}\ar[r]^{\varphi'}&(S,\fn)\ar[d]^{\beta}\\
(S'',\fn'')\ar@{-->}[r]^{\rho''}&(\widetilde{S},\widetilde{\fn})
}\end{split}\end{equation}
where $R'\xra{\beta'} S''\xra{\rho''} \widetilde{S}$ is a minimal
Cohen factorization for $\beta \varphi'$ and the diagram is a pushout.
Fact~\ref{fact120215a} implies  that the diagrams
$R\xra{\dot{\wti\vf}\alpha} S'\xra{\wti\vf'}\widetilde{S}$
and $R\xra{\beta'\dot{\varphi}} S''\xra{\rho''} \widetilde{S}$
are Cohen factorizations of $\beta\varphi$. 
In particular, the next diagram commutes:
$$
\xymatrix{
R\ar[r]^{\dot{\wti\vf}\alpha}\ar[d]_{\beta'\dot{\varphi}}
\ar[rd]^{\beta\varphi}&S'\ar[d]^-{\wti\vf'}\\
S''\ar[r]_{\rho''}&\widetilde{S}.
}
$$

By assumption, the extension 
$R/\fm\to  \wti S/\wti\n$ is 
separable. 
Hence by Fact~\ref{fact120418a}\eqref{fact120418a3},
there exists a comparison $\upsilon\colon S''\to S'$ 
from $R\xra{\beta'\dot{\varphi}} S''\xra{\rho''} \widetilde{S}$
to $R\xra{\dot{\wti\vf}\alpha} S'\xra{\wti\vf'}\widetilde{S}$.
In particular, the following diagram commutes:
$$\xymatrix{
&S''\ar@{-->}[d]^{\upsilon}\ar[rd]^{\rho''}&\\
R\ar[r]^{\dot{\wti\vf}\alpha}\ar[ur]^{\beta'\dot{\varphi}}&S'\ar[r]^{\wti\vf'}&\widetilde{S}.
}$$
Combining this with~\eqref{eq120208a}, we obtain the
next commutative diagram:
\begin{equation}\label{eq120215a}
\begin{split}
\xymatrix{
R\ar[r]^{\dot{\varphi}}\ar[dd]_{\alpha}&R'\ar[d]_{\beta'}\ar[r]^{\varphi'}&S\ar[dd]^{\beta}\\
&S''\ar[rd]^{\rho''}\ar@{-->}[d]_{\upsilon}&\\
\wti R\ar[r]_{\dot{\wti\vf}}&S'\ar[r]_{\wti\vf'}
&\widetilde{S}}
\end{split}\end{equation}

We claim that $\upsilon$ is an isomorphism. Once this is shown, it follows
from the commutativity of~\eqref{eq120215a} that the map
$\alpha'=\upsilon\beta'$ makes the diagram~\eqref{eq120209a} commute.
The fact that the diagram~\eqref{eq120208a} is a pushout then implies that
the second square in the diagram~\eqref{eq120209a} is also a pushout. 
Lastly, the fact that $\beta'$ is weakly regular implies that
$\alpha'$ is also weakly regular, so the desired conclusions hold.

To prove that $\upsilon$ is an isomorphism
it is
enough to show that both Cohen factorizations
$R\xra{\beta'\dot{\varphi}} S''\xra{\rho''} \widetilde{S}$
and $R\xra{\dot{\wti\vf}\alpha} S'\xra{\wti\vf'} \widetilde{S}$
are minimal; see Fact~\ref{fact120418a}\eqref{fact120418a3}. 
Therefore by definition we need to show the following equalities:
$$
\dim(S'')-\dim(R)\stackrel{(*)}{=}\edim(\widetilde{S}/\fm \widetilde{S})\stackrel{(\dagger)}{=}\dim(S')-\dim(R).
$$
We begin with equality $(*)$. 
The fact that $R'\xra{\beta'} S''\xra{\rho''} \widetilde{S}$ is a minimal
Cohen factorization explains the first step in the next sequence:
$$
\dim(S'')-\dim(R')=\edim(\widetilde{S}/\fm'\widetilde{S})=\edim(\widetilde{S}/\fn\wti{S}).
$$
The second equality is from the fact that the map $R'\xra{\vf'} S$ is surjective.
Therefore, since $R\xra{\dot{\varphi}} R'\xra{\varphi'} S$
is a minimal regular factorization, we conclude that
\begin{align*}
\dim(S'')-\dim(R)&=(\dim(S'')-\dim(R'))+(\dim(R')-\dim(R))\\
&= \edim(\widetilde{S}/\fn\wti{S})+\edim(S/\fm S).
\end{align*}
Now,  the induced map $\overline{\beta}\colon S/\fm S\to \widetilde{S}/\fm \widetilde{S}$ is
a flat local homomorphism. 
Furthermore, the closed fiber of this map is the same as the closed fiber for $\beta$,
hence it is regular. 
Using Lemma~\ref{for Cohen fact 1}, we conclude that
$$\edim(\widetilde{S}/\fm \widetilde{S})=\edim(\wti S/\n\wti S)+\edim(S/\fm S).$$
The equality $(*)$ follows from this with the previous display.

Next, we explain the  equality $(\dagger)$.
By assumption $\widetilde{R}\xra{\dot{\wti\vf}} S'\xra{\wti\vf'} \widetilde{S}$ is a minimal Cohen factorization and $\alpha\colon R\to \widetilde{R}$ is a flat local homomorphism. This explains the 
second step in the next display:
\begin{align*}
\dim(S')-\dim(R)&=(\dim(S')-\dim(\widetilde{R}))+(\dim(\widetilde{R})-\dim(R))\\
&=\edim(\widetilde{S}/\widetilde{\fm} \widetilde{S})+\dim(\widetilde{R}/\fm \widetilde{R})\\
&=\edim(\widetilde{S}/\wti\fm \widetilde{S})+\edim(\widetilde{R}/\fm \widetilde{R})\\
&=\edim(\wti S/\m\wti S).
\end{align*}
The third step comes from the fact that $\alpha$ is  weakly regular.
The fourth step is from Fact~\ref{fact120215a}
since the maps $R\xra{\alpha}\wti R\xra{\wti\vf}\wti S$
are weakly regular. This display explains $(\dagger)$, and the proof is complete.
\end{proof}

Our next result complements the previous one
by providing a horizontal map in the proof of
Theorem~\ref{intthm120201a}.

\begin{prop}\label{lem120222a}
Consider a commutative diagram of local ring homomorphisms
\begin{equation}\label{eq120222b}
\begin{split}
\xymatrix{
(R,\fm)\ar[r]^{\varphi}\ar[d]_{\alpha}&(S,\fn)\ar[d]^{\beta}\\
(\widetilde{R},\widetilde{\fm})\ar[r]^{\widetilde{\varphi}}&(\widetilde{S},\widetilde{\fn})
}
\end{split}\end{equation}
such that $\alpha$ has a regular factorization
$R\xra{\dot{\alpha}} R''\xra{\alpha'} \wti R$,
the ring $\widetilde{S}$ is complete, and
the  field extension
$R/\m\to \widetilde{S}/\widetilde{\fn}$  
is separable.
Let
$S\xra{\dot{\beta}} S'\xra{\beta'} \wti S$
be a  Cohen factorization
of $\beta$.
Then there is a commutative diagram of local ring
homomorphisms 
\begin{equation}\label{eq120222c}
\begin{split}
\xymatrix{
R\ar[r]^{\vf}\ar[d]_{\dot\alpha}&S\ar[d]^{\dot\beta}\\
R''\ar[r]^{\sigma}\ar[d]_{\alpha'}&S'\ar[d]^{\beta'}\\
\widetilde{R}\ar[r]^{\dot{\vf}}&\widetilde{S}.
}
\end{split}\end{equation}
\end{prop}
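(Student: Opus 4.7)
The strategy is to produce $\sigma$ from a comparison of two Cohen factorizations of the composite local homomorphism $\beta\vf\colon R\to\wti S$, echoing the approach in Proposition~\ref{thm120201a}. One Cohen factorization will be built from the ``$R''$-side'' using the given regular factorization of $\alpha$, while the other will be built from the ``$S'$-side'' using the given Cohen factorization of $\beta$; the separability hypothesis will then allow these two factorizations to be spliced together.

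More precisely, since $\wti S$ is complete, Fact~\ref{fact120418a}\eqref{fact120418a1} applied to the local homomorphism $\wti\vf\alpha'\colon R''\to\wti S$ furnishes a Cohen factorization $R''\xra{\dot\tau}T\xra{\tau'}\wti S$. Fact~\ref{fact120215a} implies that $\dot\tau\dot\alpha\colon R\to T$ is weakly regular, so $R\xra{\dot\tau\dot\alpha}T\xra{\tau'}\wti S$ is a Cohen factorization of $\beta\vf=\wti\vf\alpha'\dot\alpha$. Similarly, since $S'$ is complete, Fact~\ref{fact120418a}\eqref{fact120418a1} applied to $\dot\beta\vf\colon R\to S'$ yields a Cohen factorization $R\xra{\dot\mu}M\xra{\mu'}S'$; because $\beta'\mu'$ is a composition of two surjections, the diagram $R\xra{\dot\mu}M\xra{\beta'\mu'}\wti S$ is then a second Cohen factorization of $\beta\vf=\beta'\dot\beta\vf$.

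The separability of $R/\m\to\wti S/\wti\n$ enables the application of Fact~\ref{fact120418a}\eqref{fact120418a3}, producing a comparison $\upsilon\colon T\to M$, i.e., a local homomorphism satisfying $\upsilon\dot\tau\dot\alpha=\dot\mu$ and $(\beta'\mu')\upsilon=\tau'$. Setting $\sigma:=\mu'\upsilon\dot\tau\colon R''\to S'$, the two squares of~\eqref{eq120222c} are immediate from $\sigma\dot\alpha=\mu'\upsilon\dot\tau\dot\alpha=\mu'\dot\mu=\dot\beta\vf$ and $\beta'\sigma=(\beta'\mu')\upsilon\dot\tau=\tau'\dot\tau=\wti\vf\alpha'$.

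The only step requiring care is the verification that the two candidate diagrams really are Cohen factorizations of $\beta\vf$; this amounts to checking weak regularity of the left-hand maps (handled by Fact~\ref{fact120215a}), surjectivity of the right-hand maps (composition of surjections), and completeness of the middle rings (built into Fact~\ref{fact120418a}\eqref{fact120418a1}). Once those are in hand, the comparison machinery of~\cite{avramov:solh} encoded in Fact~\ref{fact120418a}\eqref{fact120418a3} does essentially all of the remaining work.
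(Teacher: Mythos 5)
Your proof is correct, and it is cleaner than the paper's in one genuine respect. The core mechanism is the same as the paper's: build two Cohen factorizations of the composite $\beta\vf=\wti\vf\alpha$ with target $\wti S$, invoke the separability of $R/\m\to\wti S/\wti\n$ to get a comparison between them, and assemble $\sigma$ from the comparison. But the paper constructs its two factorizations indirectly, by first fixing Cohen factorizations $R\to R'\to S$ of $\vf$ and $\wti R\to\wti R'\to\wti S$ of $\wti\vf$ and then further factoring $\dot\beta\vf'\colon R'\to S'$ and $\dot{\wti\vf}\alpha'\colon R''\to\wti R'$; since the existence of a Cohen factorization of $\vf$ requires $S$ to be complete, the paper is forced into a two-case argument, with a second case reducing the general situation to the complete one by passing to the $\n$-adic completion of $S$ and citing~\cite[(1.9) Remark]{avramov:solh}. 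You instead factor $\wti\vf\alpha'\colon R''\to\wti S$ and $\dot\beta\vf\colon R\to S'$ directly, both of which have complete targets regardless of whether $S$ is complete, so the case division disappears entirely. All the verifications you flag (weak regularity of $\dot\tau\dot\alpha$ and $\dot\mu$ via Fact~\ref{fact120215a}, surjectivity of $\beta'\mu'$, completeness of $T$ and $M$) are exactly the right ones and go through, and the two commutativity computations for $\sigma=\mu'\upsilon\dot\tau$ are correct.
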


\begin{proof}
We prove the result in two cases.

Case 1: the ring $S$ is complete.
Let 
$R\xra{\dot{\varphi}} R'\xra{\varphi'} S$
and
$\widetilde{R}\xra{\dot{\wti\vf}} \wti R'\xra{\wti\vf'} \widetilde{S}$
be   Cohen factorizations
of $\vf$ and $\widetilde{\varphi}$, respectively. 
Since $\wti R'$ and $S'$ are complete, the maps
$R''\xra{\dot{\wt{\vf}}\alpha'}\wti{R}'$
and $R'\xra{\dot\beta\vf'}S'$ have Cohen factorizations
$R''\xra{\dot\pi}T\xra{\pi'}\wti{R}'$
and
$R'\xra{\dot\tau}T'\xra{\tau'}S'$,
respectively.
Since diagram~\eqref{eq120222b} commutes,
one sees readily that
$\wti\vf'\pi'\dot\pi\dot\alpha=\beta'\tau'\dot\tau\dot\vf$.
In other words, the diagrams
\begin{align}
R\xra{\dot\pi\dot\alpha}T\xra{\wti\vf'\pi'}\wti S
&&
R\xra{\dot\tau\dot\vf}T'\xra{\beta'\tau'}\wti S
\label{eq120222d}
\end{align}
are Cohen factorizations of the same map;
see Fact~\ref{fact120215a}.

By assumption, the induced field extension
$R/\m\to\wti S/\wti\n$ is separable.
Thus, from Fact~\ref{fact120418a}\eqref{fact120418a3} we conclude that there
is a comparison $v\colon T\to T'$
between the factorizations~\eqref{eq120222d}.
It follows that the next diagram commutes
$$\xymatrix{
R\ar[rr]^{\dot{\varphi}}\ar[d]_{\dot\alpha}&&R'\ar[r]^{\varphi'}\ar[d]^{\dot\tau}&S\ar[d]^{\dot\beta}\\
R''\ar[r]^{\dot\pi}\ar[d]_{\alpha'}&T\ar[r]^-v\ar[d]^{\pi'}& T'\ar[r]^{\tau'} &S'\ar[d]^{\beta'}\\
\widetilde{R}\ar[r]^{\dot{\wti\vf}}&\wti{R}'\ar[rr]^{\wti\vf'}&&\widetilde{S}.
}$$
So the map $\sigma=\tau' v\dot\pi\colon R''\to S'$
makes diagram~\eqref{eq120222c} commute.

Case 2: the general case.
Let $\Comp{(-)}n$ denote the $\n$-adic completion functor,
and consider the following  commutative diagram of local ring homomorphisms
$$\xymatrix{
S\ar[r]\ar[d]_{\dot\beta}
&\Comp Sn\ar[d]^{\Comp{\dot\beta}n} \\
S'\ar[r]^{\cong}&\Comp{S'}n}$$
where the horizontal maps are the natural ones.
The map $S'\to\Comp{S'}n$ is an isomorphism since $S'$ is complete
and $\dot\beta$ is local.
Accordingly, we identify $S'$ and $\Comp{S'}n$, and similarly
for $\wti S$ and $\Comp{\wti{S}}n$.
From~\cite[(1.9) Remark]{avramov:solh} we know that 
the diagram $\Comp Sn\xra{\Comp{\dot\beta}n}S'\xra{\beta'}\wti S$
is a Cohen factorization of the map $\Comp Sn\xra{\Comp\beta n}\wti S$.
The previous diagram provides  the second commutative triangle in the next diagram,
and the other triangle commutes by definition of $\grave\vf$:
$$
\xymatrix{
&&S\ar[ld]\ar[ldd]^{\dot\beta} \\
R\ar[r]_{\grave\vf}\ar[rru]^{\vf}\ar[d]_{\dot\alpha}&\Comp Sn\ar[d]_{\Comp{\dot\beta}n}\\
R''\ar@{-->}[r]^{\sigma}\ar[d]_{\alpha'}&S'\ar[d]^{\beta'}\\
\widetilde{R}\ar[r]^{\dot{\vf}}&\widetilde{S}.}
$$
Case 1 provides a local ring homomorphism $\sigma$ making the two squares
commute. It follows readily that $\sigma$ also makes the diagram~\eqref{eq120222c}
commute.
\end{proof}

\begin{para}[Proof of Theorem~\ref{intthm120201a}]
\label{proof120417a}
By Proposition~\ref{lem120222a} there is a local ring homomorphism
$\sigma\colon R''\to S'$ making the next diagram commute:
\begin{equation}\label{eq120223a}
\begin{split}
\xymatrix{
R\ar[r]^{\dot{\varphi}}\ar[d]_{\dot\alpha}&R'\ar[r]^{\varphi'}&S\ar[d]^{\dot\beta}\\
R''\ar[rr]^{\sigma}\ar[d]_{\alpha'}&&S'\ar[d]^{\beta'}\\
\widetilde{R}\ar[r]^{\dot{\wti\vf}}&\wti{R}'\ar[r]^{\wti\vf'}&\widetilde{S}.
}
\end{split}\end{equation}
Since $S'$ is complete, the map $\sigma$ has a Cohen factorization
$R''\xra{\dot\sigma}T'\xra{\sigma'}S'$.
Proposition~\ref{thm120201a} provides a weakly regular local ring homomorphism
$\delta\colon R'\to T'$
making the next diagram commute,
where
$R''\xra{\dot\pi}T''\xra{\pi'}\wti{R}'$ is a Cohen factorization
of $R''\xra{\dot{\wti\vf}\alpha'}\wti{R}'$:
$$\xymatrix{
R\ar[rr]^{\dot{\varphi}}\ar[d]_{\dot\alpha}&&R'\ar[r]^{\varphi'}\ar[d]^{\delta}&S\ar[d]^{\dot\beta}\\
R''\ar[rr]^{\dot\sigma}\ar[dd]_{\alpha'}\ar[rd]^-{\dot\pi}&&T'\ar[r]^{\sigma'}&S'\ar[dd]^{\beta'}\\
&T''\ar[rd]^-{\pi'}\\
\widetilde{R}\ar[rr]^{\dot{\wti\vf}}&&\wti{R}'\ar[r]^{\wti\vf'}&\widetilde{S}.
}$$
Fact~\ref{fact120215a} implies that the diagrams
$R''\xra{\dot\pi}T''\xra{\wti{\vf}'\pi'}\wti S$
and
$R''\xra{\dot\sigma}T'\xra{\beta'\sigma'}\wti S$
are two Cohen factorizations of the map 
$\wti\vf\alpha'=\beta'\sigma$.
By assumption~\eqref{intthm120201a3}, the induced field extension
$R/\m\to\wti S/\wti \n$ is separable, so there is a comparison
$v\colon T'\to T''$ of these Cohen factorizations, 
by Fact~\ref{fact120418a}\eqref{fact120418a3}. This is a local ring homomorphism
making the next diagram commute:
\begin{equation}\label{eq120229a}
\begin{split}
\xymatrix{
R\ar[r]^{\dot{\varphi}}\ar[d]_{\dot\alpha}&R'\ar[r]^{\varphi'}\ar[d]^{\delta}&S\ar[d]^{\dot\beta}\\
R''\ar[r]^{\dot\sigma}\ar[d]_{\alpha'}&T'\ar[r]^{\sigma'}\ar[d]^{\pi'v}&S'\ar[d]^{\beta'}\\
\widetilde{R}\ar[r]^{\dot{\wti\vf}}&\wti{R}'\ar[r]^{\wti\vf'}&\widetilde{S}.
}\end{split}\end{equation}
At this point, the only problem is that $\pi'v$ may not be surjective.
We use a technique from~\cite{avramov:solh, grothendieck:ega4-4} to remedy this.

Since $\wti R'$ and $S'$ are local and complete and 
the maps $\wti\vf'$ and $\beta'$ are surjective, 
in the pull-back square
$$\xymatrix{
\wti T\ar[r]^{f} \ar[d]_{g}\ar@{}[rd]|<<{\ulcorner}& S'\ar[d]^{\beta'} \\
\wti R'\ar[r]^{\wti\vf'}&\wti S
}$$
the ring $\wti T$ is noetherian, local, and complete by~\cite[(19.3.2.1)]{grothendieck:ega4-4}.
The commutativity of the diagram~\eqref{eq120229a} implies that there is a local
ring homomorphism $w\colon T'\to\wti T$ making the next diagram commute:
\begin{equation}\label{eq120229b}
\begin{split}
\xymatrix{
R\ar[r]^{\dot{\varphi}}\ar[d]_{\dot\alpha}&R'\ar[rr]^{\varphi'}\ar[d]^{\delta}&&S\ar[d]^{\dot\beta}\\
R''\ar[r]^{\dot\sigma}\ar[dd]_{\alpha'}&T'\ar[rr]^{\sigma'}\ar[dd]_{\pi'v}\ar[rd]^w&&S'\ar[dd]^{\beta'}\\
&&\wti T\ar[ru]^f\ar[ld]_g\\
\widetilde{R}\ar[r]^{\dot{\wti\vf}}&\wti{R}'\ar[rr]^{\wti\vf'}&&\widetilde{S}.
}\end{split}\end{equation}
The ring $\wti T$ is complete, so the map $w$ has a Cohen factorization
$T'\xra{\dot w}T\xra{w'}\wti T$.
Consider the following diagram
$$\xymatrix{
R\ar[r]^{\dot{\varphi}}\ar[d]_{\dot\alpha}&R'\ar[r]^{\varphi'}\ar[d]^{\dot w\delta}&S\ar[d]^{\dot\beta}\\
R''\ar[r]^{\dot w\dot{\sigma}}\ar[d]_{\alpha'}&T\ar[r]^{fw'}\ar[d]^{gw'}&S'\ar[d]^{\beta'}\\
\widetilde{R}\ar[r]^{\dot{\wti\vf}}&\wti{R}'\ar[r]^{\wti\vf'}&\widetilde{S}.
}$$
This diagram has the  desired properties for~\eqref{eq120222z}.
Indeed, commutativity follows from the commutativity of~\eqref{eq120229b},
using the equation $w=w'\dot w$, and 
the middle column and middle row are Cohen factorizations
by Fact~\ref{fact120215a}.
\qed
\end{para}

The following example, from~\cite[(1.8) Example]{avramov:solh} shows that
the separability assumptions~\eqref{intthm120201a3} 
in Theorem~\ref{intthm120201a} are necessary.

\begin{ex}\label{ex130327a}
In~\cite[(1.8) Example]{avramov:solh}, the authors construct a
local ring homomorphism $\vf\colon R\to S$
with a pair of Cohen factorizations
\begin{equation}
\label{eq130327a}
\begin{split}
\xymatrix{
& R_1\ar[rd]^{\vf_1'}\ar@{..>}[dd]^<<<<<<{\not\exists} \\
R\ar[ur]^{\dot\vf_1}\ar[rr]^<<<<<<<{\vf}\ar[dr]_{\dot\vf_2}&&S \\
& R_2\ar[ru]_{\vf_2'}}
\end{split}\end{equation}
such that there is not a comparison from the top one to the bottom one.
(Note that, by Fact~\ref{fact120418a}\eqref{fact120418a3},
the map on  residue fields induced by $\vf$ cannot be separable.)
We begin with the following diagram, as in~\eqref{eq120222a}
\begin{equation*}
\begin{split}
\xymatrix{
R\ar[r]^{\varphi}\ar[d]_{\id}&S\ar[d]^{\id}\\
R\ar[r]^{\varphi}&S
}
\end{split}\end{equation*}
with the given Cohen factorizations represented by 
the horizontal solid arrows in the next diagram.
\begin{equation*} 
\begin{split}
\xymatrix{
R\ar[r]^{\dot{\varphi}_1}\ar[d]_{\id}&R_1\ar[r]^{\varphi'_1}\ar@{..>}[d]^{\dot\gamma}&S\ar[d]^{\id}\\
R\ar@{..>}[r]^{\dot{\sigma}}\ar[d]_{\id}&T\ar@{..>}[r]^{\sigma'}\ar@{..>}[d]^{\gamma'}&S\ar[d]^{\id}\\
R\ar[r]^{\dot{\vf}_2}&R_2\ar[r]^{\vf_2'}&S
}
\end{split}\end{equation*}
If there were Cohen factorizations (as represented
by the dotted arrows in the diagram) 
then the map $\gamma'\dot\gamma$ would provide
a comparison of the Cohen factorizations from~\eqref{eq130327a},
contradicting the conclusion of~\cite[(1.8) Example]{avramov:solh}.
Thus, in the notation of Theorem~\ref{intthm120201a},
the field extension $\wti R/\wti\m\to\wti S/\wti\n$ must be
separable (even when the extension $R/\m\to \wti R/\wti\m$ is trivial).
Similarly, the reflected diagrams
\begin{equation*}
\begin{split}
\xymatrix{
R\ar[r]^{\id}\ar[d]_{\varphi}&R\ar[d]^{\varphi}\\
S\ar[r]^{\id}&S
}
\qquad\qquad
\xymatrix{
R\ar[d]_{\dot{\varphi}_1}\ar[r]^{\id}&R\ar[r]^{\id}\ar@{..>}[d]^{\dot\gamma}&R\ar[d]^{\dot{\vf}_2}\\
R_1\ar@{..>}[r]^{\dot{\sigma}}\ar[d]_{\varphi'_1}
&T\ar@{..>}[r]^{\sigma'}\ar@{..>}[d]^{\gamma'}&R_2\ar[d]^{\vf_2'}\\
S\ar[r]^{\id}&S\ar[r]^{\id}&S
}
\end{split}\end{equation*}
show that the field extension $R/\m\to \wti R/\wti\m$ must be
separable (even when the extension $\wti R/\wti\m\to\wti S/\wti\n$  is trivial).
\end{ex}

The next example shows that, even when the factorizations
$R\xra{\dot{\varphi}} R'\xra{\varphi'} S$
and 
$\widetilde{R}\xra{\dot{\wti\vf}} \wti R'\xra{\wti\vf'} \widetilde{S}$
are minimal, the map $\gamma=\gamma'\dot\gamma$ 
in Theorem~\ref{intthm120201a} is not uniquely
determined.

\begin{ex}\label{ex120306a}
Let $k$ be a field, and consider the following diagram where the unspecified maps are the
natural ones:
$$\xymatrix{
k \ar[r]\ar[d]&\power kX\ar[r]\ar@{-->}[d]^{\gamma}&\power kX\ar[d]\\
k\ar[r]&\power kX\ar[r]&\power kX/(X^2).
}
$$
One can define $\gamma$ by mapping $X$ to any power series of the form
$X+X^2f$ to make the diagram commute.
\end{ex}

\section{Properties of Local Ring Homomorphisms in Commuting Diagrams}\label{sec120306a}

In this section, we prove Theorem~\ref{prop120322a} from the introduction.

\begin{lem}\label{intthm120201b}
Fix a commutative diagram of  field extensions
\begin{equation}\label{eq120503a}
\begin{split}
\xymatrix{k\ar[r]^{\vf_0}\ar[d]_{\alpha_0}
&l\ar[d]^{\beta_0}\\
\wti k\ar[r]^{\wti\vf_0}&\wti l}
\end{split}\end{equation}
Let $\varphi\colon(R,\fm,k)\to (S,\fn,l)$ 
and  $\alpha\colon R\to (\widetilde{R},\widetilde{\fm},\wti k)$
and $\beta\colon S\to (\widetilde{S},\widetilde{\fn},\wti l)$
be local ring homomorphisms
that induce the maps $\vf_0$ and $\alpha_0$ and $\beta_0$ on residue fields.
If $\alpha$ is weakly Cohen and $\wti S$ is complete,
then there is a commutative diagram of local ring homomorphisms
$$
\xymatrix{
(R,\fm,k)\ar[r]^{\varphi}\ar[d]_{\alpha}&(S,\fn,l)\ar[d]^{\beta}\\
(\widetilde{R},\widetilde{\fm},\wti k)\ar@{-->}[r]^{\widetilde{\varphi}}&(\widetilde{S},\widetilde{\fn},\wti l)
}
$$
such that $\wti\vf$ induces the map $\wti\vf_0$ on residue fields.
\end{lem}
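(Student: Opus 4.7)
The plan is to build $\wti\vf$ by a formal-smoothness lifting argument, exploiting the fact that weakly Cohen maps are formally smooth for the maximal-ideal-adic topologies and that $\wti S$ is complete.

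First I would reduce to the case in which $\wti R$ is itself complete. Replacing $\wti R$ by its $\wti\m$-adic completion $\comp{\wti R}$, the composite $R\to\comp{\wti R}$ remains weakly Cohen (flatness, regularity of the closed fiber, and separability of the residue-field extension all persist under completion), and the residue field is still $\wti k$; a solution $\wti\vf'\colon\comp{\wti R}\to\wti S$ of the modified problem yields a solution $\wti\vf=\wti\vf'\circ(\wti R\to\comp{\wti R})$ of the original one with the same residue-field behavior. Next I would invoke the classical criterion (EGA~$0_{\mathrm{IV}}$~(19.7.1); see also \cite[Theorem~28.10]{matsumura:crt}) that a local homomorphism of noetherian local rings is formally smooth for the maximal-ideal-adic topology if and only if it is flat with geometrically regular closed fiber. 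Since $\alpha$ is weakly Cohen, its closed fiber is regular and $k\to\wti k$ is separable, so the closed fiber is geometrically regular over $k$ and $\alpha$ is formally smooth for the $\wti\m$-adic topology.

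With $\wti R$ complete, I would then construct $\wti\vf$ as the limit of a compatible system of $R$-algebra maps $\wti\vf_n\colon\wti R\to\wti S/\wti\n^n$. The starting map $\wti\vf_1$ is the composite $\wti R\onto\wti k\xra{\wti\vf_0}\wti l=\wti S/\wti\n$, which is an $R$-algebra homomorphism because the commutativity of the given field diagram forces $\wti\vf_0\alpha_0=\beta_0\vf_0$, so the two induced $R$-algebra structures on $\wti l$ (via $\alpha$ then $\wti\vf_0$, versus via $\beta\vf$ then reduction) coincide. Given $\wti\vf_n$, the surjection $\wti S/\wti\n^{n+1}\onto\wti S/\wti\n^n$ has kernel $\wti\n^n/\wti\n^{n+1}$, which is nilpotent in $\wti S/\wti\n^{n+1}$, so formal smoothness of $\alpha$ produces an $R$-algebra lift $\wti\vf_{n+1}\colon\wti R\to\wti S/\wti\n^{n+1}$. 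Since $\wti S$ is $\wti\n$-adically complete, the inverse limit of the $\wti\vf_n$ defines the desired $\wti\vf\colon\wti R\to\wti S$, which satisfies $\wti\vf\alpha=\beta\vf$ and induces $\wti\vf_0$ on residue fields by construction.

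The main subtlety is the appeal to formal smoothness; everything else is a routine inductive lift. A more hands-on alternative uses Cohen's structure theorem to write $\wti R\cong\power{R'}{y_1,\ldots,y_d}$, where $R'$ is an unramified extension of a coefficient ring of $R$ with residue field $\wti k$; one then lifts $\wti\vf_0$ to a ring map $R'\to\wti S$ using separability and sends each $y_i$ to $0\in\wti\n$.
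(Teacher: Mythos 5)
Your argument is correct and is essentially the paper's: the paper likewise deduces formal smoothness of $\alpha$ from the weakly Cohen hypothesis via EGA~IV (19.8.2.i) and then cites EGA (19.3.11) to produce the lift into the complete ring $\wti S$, whereas you have simply unpacked that corollary into the standard successive-lifting argument through the square-zero extensions $\wti S/\wti\n^{n+1}\to\wti S/\wti\n^{n}$ followed by passage to the inverse limit. The preliminary replacement of $\wti R$ by its completion is harmless but unnecessary, since the lifting property needs completeness only of the target $\wti S$.
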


\begin{proof}
By assumption, the maps $\alpha$ and $\beta$ make the following diagram commute
$$
\xymatrix{
R\ar[rrr]^-{\vf}\ar[rd]\ar[ddd]_-{\alpha}
&&& S\ar[ddd]^-{\beta}\ar[ld] \\
&k\ar[r]\ar[d]&l\ar[d] \\
&\wti k\ar[r]&\wti l \\
\wti R\ar[ur]
&&&\wti S\ar[ul]
}
$$
Since the extension $k\hookrightarrow \wti k$ is separable, the map $\alpha$
is formally smooth by~\cite[Th\'eor\`eme (19.8.2.i)]{grothendieck:ega4-1},
so~\cite[Corollaire (19.3.11)]{grothendieck:ega4-1} provides a local homomorphism 
$\widetilde{\varphi}\colon \widetilde{R}\to \widetilde{S}$ 
making the next  diagram commute:
$$
\xymatrix{
R\ar[rrr]^-{\vf}\ar[rd]\ar[ddd]_-{\alpha}
&&& S\ar[ddd]^-{\beta}\ar[ld] \\
&k\ar[r]\ar[d]&l\ar[d] \\
&\wti k\ar[r]&\wti l \\
\wti R\ar[ur]\ar@{-->}[rrr]^-{\wti\vf}
&&&\wti S\ar[ul]
}
$$
It follows that the outer square in this diagram has the desired properties.
\end{proof}

\begin{disc}
Given a diagram~\eqref{eq120503a},
there exist  Cohen maps $\alpha$ and $\beta$,
as in Lemma~\ref{intthm120201b},
by~\cite[Th\'{e}or\`{e}m 19.8.2(ii)]{grothendieck:ega4-1}.
\end{disc}

We use the next
result to relate the flat dimensions 
of the maps in Theorem~\ref{prop120322a}.

\begin{prop}
\label{prop120312b}
Consider a commutative diagram of local ring homomorphisms
\begin{equation}\label{eq120503b}
\begin{split}
\xymatrix{
(R,\fm,k)\ar[r]^{\varphi}\ar[d]_{\alpha}&(S,\fn,l)\ar[d]^{\beta}\\
(\widetilde{R},\widetilde{\fm},\wti k)\ar[r]^{\widetilde{\varphi}}&(\widetilde{S},\widetilde{\fn}, \wti l)
}
\end{split}\end{equation}
such that $\fd(\alpha)$ and $\fd(\beta)$ are both finite.
\begin{enumerate}[\rm(a)]
\item 
\label{prop120312b1}
One has $\fd(\vf)\leq\fd(\wti\vf)+\fd(\alpha)$; in particular,
if $\fd(\wti\vf)$ is finite, then so is $\fd(\vf)$.
\item 
\label{prop120312b2}
If $\alpha$ is weakly regular, then $\fd(\wti\vf)\leq\edim(\alpha)+\fd(\beta)+\fd(\vf)$;
in particular, $\fd(\vf)$ and
$\fd(\wti\vf)$ are simultaneously finite.
\item 
\label{prop120312b3}
If $\alpha$ is flat with closed fiber  a field and $\beta$ is flat,
then  $\fd(\widetilde\varphi)=\fd({\varphi})$.
\end{enumerate}
\end{prop}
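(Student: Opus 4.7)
The plan is to work in the derived category, using the identifications $\fd(\vf) = \sup(\Lotimes[R]{k}{S})$ and $\fd(\wti\vf) = \sup(\Lotimes[\wti R]{\wti k}{\wti S})$ together with the two ways of computing $\Lotimes[R]{k}{\wti S}$ forced by the commutativity of~\eqref{eq120503b}:
\begin{equation*}
\Lotimes[\wti R]{(\Lotimes[R]{k}{\wti R})}{\wti S}
\;\simeq\; \Lotimes[R]{k}{\wti S}
\;\simeq\; \Lotimes[S]{(\Lotimes[R]{k}{S})}{\wti S}.
\end{equation*}
These follow from $\wti S \simeq \wti R \lotimes_{\wti R} \wti S$ and $\wti S \simeq S \lotimes_S \wti S$, together with associativity of the derived tensor product.

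For part~\eqref{prop120312b1}, the leftmost expression yields $\fd_R(\wti S) \leq \fd(\alpha) + \fd(\wti\vf)$ via the standard estimate: an $R$-flat resolution of $k$ of length $\fd(\alpha)$ tensored with an $\wti R$-flat resolution of $\wti S$ of length $\fd(\wti\vf)$ gives a bicomplex whose total complex has length at most $\fd(\alpha) + \fd(\wti\vf)$. To upgrade this to a bound on $\fd(\vf)$, I use the rightmost expression: with $U := \Lotimes[R]{k}{S}$ and $u := \sup(U) = \fd(\vf)$, the truncation triangle in $\catd(S)$ isolates the top homology $\HH_u(U) = \Tor{u}{k}{S}$. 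Since $\HH_u(U)$ is a non-zero $S/\fm S$-module and $\beta$ is local (so $\wti S/\fm \wti S \neq 0$), combined with finiteness of $\fd(\beta)$ controlling the rest of the Tor spectral sequence for $\Lotimes[S]{U}{\wti S}$, the top-degree term survives to $E^{\infty}$ and yields $\sup(\Lotimes[S]{U}{\wti S}) \geq u$, i.e., $\fd(\vf) \leq \fd_R(\wti S)$.

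For part~\eqref{prop120312b2}, the rightmost expression gives $\fd_R(\wti S) \leq \fd(\vf) + \fd(\beta)$ by the same sup estimate. The weakly regular hypothesis on $\alpha$ makes $A := \wti R / \fm \wti R$ a regular local ring of dimension $\edim(\alpha)$, so the residue field $\wti k$ (which is also the residue field of $A$) has projective dimension $\edim(\alpha)$ over $A$ via the Koszul complex. Flatness of $\alpha$ identifies $A \simeq \Lotimes[R]{\wti R}{k}$, giving
\begin{equation*}
\Lotimes[\wti R]{\wti k}{\wti S}
\;\simeq\; \Lotimes[A]{\wti k}{(\Lotimes[\wti R]{A}{\wti S})}
\;\simeq\; \Lotimes[A]{\wti k}{(\Lotimes[R]{k}{\wti S})},
\end{equation*}
whence $\fd_{\wti R}(\wti S) \leq \edim(\alpha) + \fd_R(\wti S)$, and the stated bound $\fd(\wti\vf) \leq \edim(\alpha) + \fd(\beta) + \fd(\vf)$ follows by combining. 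Simultaneous finiteness of $\fd(\vf)$ and $\fd(\wti\vf)$ is then immediate from~\eqref{prop120312b1} and~\eqref{prop120312b2}.

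Part~\eqref{prop120312b3} is cleanest: the closed-fibre-a-field hypothesis gives $\wti\fm = \fm \wti R$, so $\wti k \cong \wti R \otimes_R k$ (underived, by flatness of $\alpha$), and flatness of $\beta$ makes $\wti S$ a flat $S$-module, hence $\lotimes_S$ becomes $\otimes_S$. Assembling,
\begin{equation*}
\Lotimes[\wti R]{\wti k}{\wti S}
\;\simeq\; \Lotimes[R]{k}{\wti S}
\;\simeq\; (\Lotimes[R]{k}{S}) \otimes_S \wti S,
\end{equation*}
and faithful flatness of the local flat map $\beta$ preserves the sup, giving $\fd(\wti\vf) = \fd(\vf)$. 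The main obstacle is the descent step in part~\eqref{prop120312b1}: proving $\fd(\vf) \leq \fd_R(\wti S)$ when $\beta$ has only finite flat dimension rather than being flat. Without faithful flatness the functor $-\otimes_S \wti S$ can annihilate non-zero $S$-modules, so locality of $\beta$ and the spectral-sequence structure must be carefully combined to rescue the top Tor.
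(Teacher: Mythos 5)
Your parts (b) and (c) are sound. Part (b) is in substance the paper's own argument: the paper lifts a regular system of parameters of $\wti R/\m\wti R$ to an $\wti R$-regular sequence $\x$ and writes $\wti k\simeq\Lotimes[\wti R]{(\wti R/\x\wti R)}{(\wti R/\m\wti R)}$, while you change base to $A=\wti R/\m\wti R$ and use the Koszul resolution of $\wti k$ over $A$; these are the same computation. Your part (c) is actually more self-contained than the paper's, which deduces (c) formally from (a) and (b); your direct route through faithful flatness of the flat local map $\beta$ does not depend on (a) at all, which is a small advantage.

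The genuine gap is in part (a), at exactly the step you yourself flag as ``the main obstacle'': the descent $\fd(\vf)\le\fd_R(\wti S)$. There are two problems. First, to get $E^2_{0,u}=\Otimes[S]{\HH_u(U)}{\wti S}\ne 0$ it is not enough that $\HH_u(U)$ is a nonzero $S/\m S$-module and $\wti S/\m\wti S\ne 0$: over a local ring a nonzero module tensored with a nonzero algebra can vanish when the module is not finitely generated (e.g.\ $\Otimes[{\bbz_{(p)}}]{\bbq}{\mathbb{F}_p}=0$). This part is repairable, since $\Tor{u}{k}{S}$ is a subquotient of a finite free $S$-module, hence finitely generated over $S$, and Nakayama together with locality of $\beta$ then gives nonvanishing. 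Second, and fatally as written, the assertion that this corner term ``survives to $E^\infty$'' is unsupported. The incoming differentials $d^r\colon E^r_{r,\,u-r+1}\to E^r_{0,u}$ originate in columns $p=r\le\fd(\beta)$ and rows $q<u$, so finiteness of $\fd(\beta)$ does not exclude any of them; since $E^r_{0,u}$ supports no outgoing differentials, $E^\infty_{0,u}$ is a quotient of $E^2_{0,u}$ by all of these images and could a priori vanish. What you are trying to establish here --- that a local homomorphism of finite flat dimension detects the supremum of a complex with degreewise finite homology --- is precisely the nontrivial content of the result the paper invokes for part (a) by citing Iyengar--Sather-Wagstaff, Theorem 5.7; it is not a routine corner argument in this spectral sequence, and the standard proof proceeds instead through minimal resolutions over $S$ and reduction to the residue field. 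You should either quote that descent result (or the analogous lemma of Avramov--Foxby) or supply the minimal-resolution argument it rests on.
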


\begin{proof}
\eqref{prop120312b1}
This is proved like~\cite[Theorem 5.7]{iyengar:golh}.

\eqref{prop120312b2}
Assume without loss of generality that $\fd(\vf)<\infty$.
Let $\x=x_1,\ldots,x_e\in\wti\m$ be a sequence whose residue sequence
in $\wti R/\m\wti R$ is a regular system of parameters.
This explains the first isomorphism in the derived category $\cat{D}(\wti S)$
in the next sequence:
\begin{align*}
\Lotimes[\wti R]{\wti k}{\wti S}
&\simeq\Lotimes[\wti R]{[\Lotimes[\wti R]{(\wti R/\x\wti R)}{(\wti R/\m\wti R)}]}{\wti S}\\
&\simeq\Lotimes[\wti R]{(\wti R/\x\wti R)}{[\Lotimes[\wti R]{(\wti R/\m\wti R)}{\wti S}]}\\
&\simeq\Lotimes[\wti R]{(\wti R/\x\wti R)}{[\Lotimes[\wti R]{(\Lotimes{k}{\wti R})}{\wti S}]}\\
&\simeq\Lotimes[\wti R]{(\wti R/\x\wti R)}{[\Lotimes[S]{(\Lotimes{k}{S})}{\wti S}]}.
\end{align*}
The second isomorphism is associativity.
The third isomorphism is from the flatness of $\alpha$.
The fourth isomorphism is from the commutativity of~\eqref{eq120503b}.
This explains the second step in the next display:
\begin{align*}
\fd(\wti\vf)
&=\amp(\Lotimes[\wti R]{\wti k}{\wti S})\\
&=\amp(\Lotimes[\wti R]{(\wti R/\x\wti R)}{[\Lotimes[S]{(\Lotimes{k}{S})}{\wti S}]})\\
&\leq\pd_{\wti R}(\wti R/\x\wti R)+\amp(\Lotimes[S]{(\Lotimes{k}{S})}{\wti S})\\
&=e+\amp(\Lotimes[S]{(\Lotimes{k}{S})}{\wti S})\\
&= e+\fd(\beta\vf)\\
&\leq e+\fd(\beta)+\fd(\vf)\\
&=\edim(\wti R/\m\wti R)+\fd(\beta)+\fd(\vf).
\end{align*}
The first and fifth steps are from~\cite[Proposition 5.5]{avramov:hdouc}.
The third step is standard; see~\cite[(7.28) and (8.17)]{foxby:hacr}.
The fourth step is from~\cite[Corollary to Theorem 22.5]{matsumura:crt},
which tells us that $\x$ is $\wti R$-regular.
The sixth step is by~\cite[(1.8(a))]{avramov:lgh}.
The seventh step is by definition of $e$ as $\edim(\wti R/\m\wti R)$.

\eqref{prop120312b3}
Part~\eqref{prop120312b1}
implies that $\fd(\widetilde\varphi)\geq\fd({\varphi})$,
and~\eqref{prop120312b2}
implies that $\fd(\widetilde\varphi)\leq\fd({\varphi})$.
\end{proof}

Here are some examples that show the limitations of the previous result.

\begin{ex}\label{ex120313a}
Let $k$ be a field, and consider the following commutative diagram of 
natural local ring homomorphisms:
$$\xymatrix{
k\ar[r]^-{\vf}\ar[d]_-{\alpha}
& \power kX\ar[d]^-{\beta} \\
\power kX\ar[r]^-{\wti\vf}
&\power kX/(X^2).
}$$
Note that $\alpha$ is weakly regular, and we have
$$\fd(\wti\vf)=1<2=1+1+0=\edim(\alpha)+\fd(\beta)+\fd(\vf)$$
so we can have strict inequality in Proposition~\ref{prop120312b}\eqref{prop120312b2}.
\end{ex}

\begin{ex}\label{ex120313b}
Let $k$ be a field, and let $S$ be a  complete local ring with
coefficient field $k$, and assume that $S$ is not a field.
Consider the following commutative diagram of 
natural local ring homomorphisms:
$$\xymatrix{
k\ar[r]^-{\vf}\ar[d]_-{\alpha}
& S\ar[d]^-{\beta} \\
k\ar[r]^-{\wti\vf}
&k.
}$$
Then we have
$\fd(\widetilde\varphi)=0<0+\fd(\beta)=\fd({\varphi})+\fd(\beta)$.
so we can have strict inequality in Proposition~\ref{prop120312b}\eqref{prop120312b3}
if $\beta$ is not flat.
\end{ex}

\begin{lem}\label{prop120312d}
Let $(\wti R',\wti\m',\wti k')\xla{\alpha'}(R',\m',k')\xra{\rho} (R_1,\m_1,k_1)\xla\tau (Q,\fr,k_1)$ be local ring homomorphisms such that $\alpha'$ is  Cohen, 
$\rho$ is flat, and
$\tau$ is surjective.
Fix a commutative diagram 
$$\xymatrix{k'\ar[r]^-{\rho_0}\ar[d]_-{\alpha'_0}
&k_1\ar[d]^-{\alpha_0}\\
\wti k'\ar[r]^-{\wti\rho_0}&\wti k_1}$$
of  field extensions such that $\rho_0$ and $\alpha'_0$ are the maps induced
on residue fields by $\rho$ and $\alpha'$.
Fix a weakly regular  homomorphism
$\alpha_1\colon R_1\to (\widetilde{R}_1,\widetilde{\m}_1,\wti k_1)$
that induces the map $\alpha_0$  on residue fields.
If  $\wti R_1$ is complete,
then there is a commutative diagram of local ring homomorphisms
$$
\xymatrix{
R'\ar[r]^{\rho}\ar[d]_-{\alpha'}&R_1\ar[d]^-{\alpha_1}
&Q\ar[l]_-\tau\ar[d]^-{\delta}\\
\widetilde{R}'\ar[r]^{\widetilde{\rho}}&\widetilde{R}_1
&\wti Q\ar[l]_-{\wti\tau}
}
$$
such that $\wti\rho$ is flat, $\delta$ is weakly regular, the right-hand square is a pushout, and $\wti\rho$
induces the map $\wti\rho_0$ on residue fields.
\end{lem}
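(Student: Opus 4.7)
The plan is to produce the three new homomorphisms in turn, then verify the remaining properties. I would begin with the right-hand column: the composite $\alpha_1\tau\colon Q\to \wti R_1$ targets a complete ring, so by Fact~\ref{fact120418a}\eqref{fact120418a1} it admits a minimal Cohen factorization $Q\xra{\delta}\wti Q\xra{\wti\tau}\wti R_1$ in which $\delta$ is weakly regular and $\wti\tau$ is surjective. Because $\tau$ is surjective, $\alpha_1$ is weakly regular, and $\wti R_1$ is complete, Fact~\ref{fact120418a}\eqref{fact120418a2} forces the natural isomorphism $\wti R_1\cong \wti Q\otimes_Q R_1$, so the right-hand square is automatically a pushout.

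For the left-hand column, Lemma~\ref{intthm120201b} applies to the given data: $\alpha'$ is weakly Cohen (being Cohen), $\alpha_1$ is a local homomorphism into the complete ring $\wti R_1$ realizing $\alpha_0$ on residue fields, and the square of residue extensions commutes. The lemma produces a local homomorphism $\wti\rho\colon \wti R'\to\wti R_1$ making the left-hand square commute and inducing the prescribed map $\wti\rho_0$ on residue fields. At this point the three squares commute, $\delta$ is weakly regular, and the residue-field claim on $\wti\rho$ holds; only flatness of $\wti\rho$ remains.

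The main content is this last flatness claim, which I expect to be the only real obstacle. The composite $\alpha_1\rho\colon R'\to \wti R_1$ is flat, being a composition of flat maps. Since $\wti R_1$ is finitely generated over itself (via $\wti\rho$), the local criterion for flatness reduces flatness of $\wti\rho$ to the vanishing of $\Tor_1^{\wti R'}(\wti k',\wti R_1)$. Because $\alpha'$ is Cohen, we have $\wti\m'=\m'\wti R'$, hence $\wti k'\cong\wti R'\otimes_{R'}k'$; flatness of $\alpha'$ upgrades this to an isomorphism $\wti k'\simeq\Lotimes[R']{\wti R'}{k'}$ in $\catd(\wti R')$. Associativity of the derived tensor product then gives
\[
\Lotimes[\wti R']{\wti k'}{\wti R_1}\simeq\Lotimes[R']{k'}{(\Lotimes[\wti R']{\wti R'}{\wti R_1})}\simeq\Lotimes[R']{k'}{\wti R_1},
\]
which, by flatness of $\alpha_1\rho$, is concentrated in degree zero. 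In particular $\Tor_1^{\wti R'}(\wti k',\wti R_1)=0$, so $\wti\rho$ is flat, and the diagram has all of the asserted properties.
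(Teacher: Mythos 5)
Your proposal is correct and follows the paper's proof almost exactly: the left-hand square comes from Lemma~\ref{intthm120201b}, the right-hand square and the pushout property from Fact~\ref{fact120418a}\eqref{fact120418a2} applied to a minimal Cohen factorization of $\alpha_1\tau$. The only difference is the flatness of $\wti\rho$, where the paper simply cites Proposition~\ref{prop120312b}\eqref{prop120312b3} while you give a correct direct argument (local criterion plus the isomorphism $\Lotimes[\wti R']{\wti k'}{\wti R_1}\simeq\Lotimes[R']{k'}{\wti R_1}$, using that $\alpha'$ has closed fiber a field and that the already-established commutativity of the left square identifies the two $R'$-structures on $\wti R_1$) that in effect reproves the needed special case of that proposition.
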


\begin{proof}
Lemma~\ref{intthm120201b} provides a local ring homomorphism
$\wti\rho\colon\wti R'\to\wti R_1$ making the left-hand square commute.
Proposition~\ref{prop120312b}\eqref{prop120312b3} implies that $\wti\rho$ is flat.
We get the right-hand square from
Fact~\ref{fact120418a}\eqref{fact120418a2}. 
\end{proof}

The next result augments~\cite[(1.13) Proposition]{avramov:cid}.

\begin{prop} \label{prop120312e}
Let $\alpha'\colon(R',\m',k')\to (\wti R',\wti \m',\wti k')$ be a  Cohen homomorphism, 
and let $M$ be a homologically finite
$R'$-complex. 
Then we have
\begin{enumerate}[\rm(a)]
\item \label{prop120312e1}
$\cidim_{\wti R'}(\Lotimes[R']{\wti R'}{M})=\cidim_{R'}(M)$;
thus, the quantities 
$\cidim_{\wti R'}(\Lotimes[R']{\wti R'}{M})$ and $\cidim_{R'}(M)$
are simultaneously finite.
\item \label{prop120312e2}
$\cmdim_{\wti R'}(\Lotimes[R']{\wti R'}{M})=\cmdim_{R'}(M)$;
in particular, 
$\cmdim_{\wti R'}(\Lotimes[R']{\wti R'}{M})$ and $\cmdim_{R'}(M)$
are simultaneously finite.
\end{enumerate}
\end{prop}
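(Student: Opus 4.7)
The plan is as follows. By Fact~\ref{fact120312a}, the inequalities $\cidim_{\wti R'}(\Lotimes[R']{\wti R'}{M}) \geq \cidim_{R'}(M)$ and $\cmdim_{\wti R'}(\Lotimes[R']{\wti R'}{M}) \geq \cmdim_{R'}(M)$ hold in general, with equality when the left-hand side is finite. Hence it suffices to prove the descent statement: if $\cidim_{R'}(M) < \infty$ (respectively, $\cmdim_{R'}(M) < \infty$), then the corresponding dimension on $\wti R'$ is also finite. The overall strategy is to lift a witnessing (G-)quasi-deformation of $M$ through the Cohen map $\alpha'$.

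Suppose $\cidim_{R'}(M)<\infty$, and fix a quasi-deformation $R' \xra{\rho} R_1 \xla{\tau} Q$ with $\pd_Q(\Lotimes[R']{R_1}{M})<\infty$. First I would construct a Cohen extension $\alpha_1\colon R_1 \to \wti R_1$ together with residue-field maps making
$$
\xymatrix{k'\ar[r]^-{\rho_0}\ar[d]_-{\alpha'_0} & k_1\ar[d]^-{\alpha_0} \\ \wti k'\ar[r] & \wti k_1}
$$
commute, by choosing $\wti k_1$ large enough to receive $\wti k'$ over $k'$ separably and invoking \cite[Th\'eor\`eme 19.8.2(ii)]{grothendieck:ega4-1} (as in the Remark preceding Proposition~\ref{prop120312b}). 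Then Lemma~\ref{prop120312d} produces a commutative diagram
$$
\xymatrix{R'\ar[r]^{\rho}\ar[d]_-{\alpha'} & R_1\ar[d]^-{\alpha_1} & Q\ar[l]_-\tau\ar[d]^-\delta \\ \wti R'\ar[r]^{\wti\rho} & \wti R_1 & \wti Q\ar[l]_-{\wti\tau}}
$$
in which $\wti\rho$ is flat, $\delta$ is weakly regular, and the right-hand square is a pushout. Flatness of $\delta$ ensures that the image of a $Q$-regular sequence generating $\ker(\tau)$ remains a $\wti Q$-regular sequence generating $\ker(\wti\tau)$, so $\wti R' \to \wti R_1 \leftarrow \wti Q$ is a quasi-deformation.

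The key computation then combines associativity of derived tensor product with the pushout $\wti R_1 \simeq \Lotimes[Q]{\wti Q}{R_1}$ (valid because $\delta$ is flat and $\tau$ has Koszul-resolvable kernel):
$$
\Lotimes[\wti R']{\wti R_1}{(\Lotimes[R']{\wti R'}{M})} \simeq \Lotimes[R']{\wti R_1}{M} \simeq \Lotimes[Q]{\wti Q}{(\Lotimes[R']{R_1}{M})}.
$$
Since $\pd_Q(\Lotimes[R']{R_1}{M})<\infty$ and $\delta$ is flat, the right-hand side has finite projective dimension over $\wti Q$, which gives $\cidim_{\wti R'}(\Lotimes[R']{\wti R'}{M})<\infty$ and completes part~\eqref{prop120312e1}. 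For part~\eqref{prop120312e2}, the same construction works starting from a witnessing G-quasi-deformation; one replaces $\pd$ by $\gdim$ throughout, using that finite G-dimension ascends along the flat map $\delta$ and that G-perfectness of $R_1$ over $Q$ descends to G-perfectness of $\wti R_1 = \Otimes[Q]{\wti Q}{R_1}$ over $\wti Q$ (flat base change preserves grade for faithfully flat local maps and does not increase G-dimension).

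The main obstacle is the construction in step three: arranging a Cohen extension $\alpha_1$ whose residue field simultaneously receives $k_1$ and $\wti k'$ over $k'$ in a separable manner, so that Lemma~\ref{prop120312d} actually applies. Once this residue-field bookkeeping is done, everything else is formal manipulation of the pushout and flat base change.
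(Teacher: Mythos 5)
Your proposal is correct and follows essentially the same route as the paper: reduce to the descent direction via Fact~\ref{fact120312a}, lift the witnessing (G-)quasi-deformation through $\alpha'$ by forming a join of residue fields, a weakly regular extension $\alpha_1$ of $R_1$, and Lemma~\ref{prop120312d}, then conclude by the pushout identification $\wti R_1\simeq\Lotimes[Q]{\wti Q}{R_1}$ and flat base change of projective (resp.\ G-) dimension. The only cosmetic difference is that you arrange $\alpha_1$ to be Cohen, whereas the paper only needs it weakly regular (via EGA III, Proposition (0.10.3.1)); this costs nothing and changes nothing.
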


\begin{proof}
\eqref{prop120312e1}
It suffices by Fact~\ref{fact120312a}
to assume that $\cidim_{R'}(M)$ is finite and prove that
$\cidim_{\wti R'}(\Lotimes[\wti R']{\wti R'}{M})$ is finite as well.
Let 
$R'\xra{\rho} (R_1,\m_1,k_1)\xla\tau (Q,\fr,k_1)$
be a quasi-deformation such that
$\pd_Q(\Lotimes[R']{R_1}{M})<\infty$.
Let $\wti k_1$ be a join of $k_1$ and $\wti k'$ over 
 $k'$, so there is a commutative diagram 
$$\xymatrix{k'\ar[r]\ar[d]
&k_1\ar[d]\\
\wti k'\ar[r]&\wti k_1}$$
of  field extensions.
There is a weakly regular  homomorphism
$\alpha_1\colon R_1\to (\widetilde{R}_1,\widetilde{\m}_1,\wti k_1)$
that induces the map $k_1\to \wti k_1$  on residue fields
by~\cite[Proposition (0.10.3.1)]{grothendieck:ega3-1}.
Compose with the natural map from $\wti R_1$ to its completion if necessary
to assume that $\wti R_1$ is complete.
Thus, Lemma~\ref{prop120312d}
provides a commutative diagram of local ring homomorphisms
$$
\xymatrix{
R'\ar[r]^{\rho}\ar[d]_-{\alpha'}&R_1\ar[d]^-{\alpha_1}
&Q\ar[l]_-\tau\ar[d]^-{\delta}\\
\widetilde{R}'\ar[r]^{\widetilde{\rho}}&\widetilde{R}_1
&\wti Q\ar[l]_-{\wti\tau}
}
$$
such that $\wti\rho$ is flat, $\delta$ is weakly regular, and the right-hand square is a pushout.
In particular, the bottom row of this diagram is a quasi-deformation.
Also, in the following sequence, the second and fourth equalities are from the
flatness of $\delta$:
\begin{align*}
\pd_{\wti Q}(\Lotimes[\wti R']{\wti R_1}{(\Lotimes[R']{\wti R'}{M}}))
&=\pd_{\wti Q}(\Lotimes[R']{\wti R_1}{M}) \\
&=\pd_{\wti Q}(\Lotimes[R']{(\Lotimes[Q]{\wti Q}{R_1})}{M})\\
&=\pd_{\wti Q}(\Lotimes[Q]{\wti Q}{(\Lotimes[R']{R_1}{M})})\\
&=\pd_{Q}(\Lotimes[R']{R_1}{M}).
\end{align*}
By definition, it follows that
$\cidim_{\wti R'}(\Lotimes[\wti R']{\wti R'}{M})$ is finite, as desired.

\eqref{prop120312e2}
This is proved as above, starting with a G-quasi-deformation
$(R',\m',k')\xra{\rho} (R_1,\m_1,k_1)\xla\tau (Q,\fr,k_1)$,
and observing that the diagram
$\wti R'\xra{\wti \rho} \wti R_1\xla{\wti \tau} \wti Q$
is also a a G-quasi-deformation.
This follows from the fact that the flat base change
of a G-perfect ideal is G-perfect, which is readily checked.
\end{proof}

The next result compares to Proposition~\ref{prop120312b}.
It is important for our proof of Theorem~\ref{prop120322a} from the introduction.

\begin{prop}
\label{prop120312c}
Consider a commutative diagram of local ring homomorphisms
$$
\xymatrix{
(R,\fm)\ar[r]^{\varphi}\ar[d]_{\alpha}&(S,\fn)\ar[d]^{\beta}\\
(\widetilde{R},\widetilde{\fm})\ar[r]^{\widetilde{\varphi}}&(\widetilde{S},\widetilde{\fn})
}
$$
such that $\alpha$ is  weakly Cohen,
$\beta$ is  weakly regular, and
the induced map $\widetilde{R}/\widetilde{\fm}\to \widetilde{S}/\widetilde{\fn}$ is separable.
Then we have 
\begin{enumerate}[\rm(a)]
\item \label{prop120312c1}
$\gdim(\widetilde{\varphi})=\gdim(\varphi)+\edim(\alpha)-\edim(\beta)$.
Hence, the quantities
$\gdim(\varphi)$ and $\gdim(\widetilde{\varphi})$ are simultaneously finite.
\item \label{prop120312c2}
If $\beta$ is  Cohen,
then $\cidim(\widetilde{\varphi})=\cidim(\varphi)+\edim(\alpha)$,
so the quantities
$\cidim(\varphi)$ and $\cidim(\widetilde{\varphi})$ are simultaneously finite.
\item \label{prop120312c3}
If $\beta$ is  Cohen,
then $\cmdim(\widetilde{\varphi})=\cmdim(\varphi)+\edim(\alpha)$,
so the quantities
$\cmdim(\varphi)$ and $\cmdim(\widetilde{\varphi})$ are simultaneously finite.
\end{enumerate}
\end{prop}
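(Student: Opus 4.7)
The plan is to pass to semi-completions throughout and combine Theorem~\ref{intthm120201a} with Proposition~\ref{prop120312e}. Part (a) proceeds via the Avramov--Foxby analog of the Auslander--Bridger formula. In a minimal Cohen factorization $R\xra{\dot\vf}R'\xra{\vf'}\comp S$ of $\grave\vf$, the weak regularity of $\dot\vf$ (with Cohen--Macaulay closed fiber) gives $\depth R'=\depth R+\edim(\dot\vf)$; when $\gdim(\vf)<\infty$ the identity $\gdim_{R'}(\comp S)=\depth R'-\depth\comp S$ collapses to $\gdim(\vf)=\depth R-\depth S$, and analogously $\gdim(\wti\vf)=\depth\wti R-\depth\wti S$ when finite. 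Combining with $\depth\wti R=\depth R+\edim(\alpha)$ and $\depth\wti S=\depth S+\edim(\beta)$ (weak regularity of $\alpha,\beta$) yields the displayed formula. Simultaneous finiteness follows from the composition formulas for G-dimension of local homomorphisms from~\cite{iyengar:golh} applied to $\wti\vf\alpha=\beta\vf$, together with finiteness of $\gdim(\alpha)$ and $\gdim(\beta)$ (these are weakly regular, so their G-dimensions equal $-\edim(\alpha)$ and $-\edim(\beta)$) and faithful flatness of $\alpha$, which allows descent.

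For parts (b) and (c) I would fix a minimal Cohen factorization $R\xra{\dot\vf}R'\xra{\vf'}\comp S$ of $\grave\vf$. Since $\beta$ is Cohen, so is $\comp\beta$, and $\comp S\xra{\comp\beta}\comp{\wti S}\xra{\id}\comp{\wti S}$ is a Cohen factorization of $\comp\beta$. Applying Theorem~\ref{intthm120201a} with the regular factorization $R\xra{\alpha}\wti R\xra{\id}\wti R$ of $\alpha$ and a minimal Cohen factorization of $\grave{\wti\vf}$ produces a $3\times 3$ diagram with intermediate ring $T$ in which the middle row $\wti R\xra{\dot\sigma}T\xra{\sigma'}\comp{\wti S}$ and middle column $R'\xra{\dot\gamma}T\xra{\gamma'}\wti R'$ are minimal Cohen factorizations. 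The middle and bottom rows are then both minimal Cohen factorizations of $\grave{\wti\vf}$, so by Fact~\ref{fact120418a}\eqref{fact120418a3} (valid thanks to the separability assumption) $\gamma'$ is an isomorphism, and I identify $T\cong\wti R'$. A dimension count using Fact~\ref{fact120215a} gives $\edim(\dot\gamma)=\edim(\alpha)+\edim(\wti\vf)-\edim(\vf)$, while minimality gives $\edim(\dot\vf)=\edim(\vf)$ and $\edim(\dot{\wti\vf})=\edim(\wti\vf)$.

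The crux is the equality
\[\cidim_{\wti R'}(\comp{\wti S})=\cidim_{R'}(\comp S)+\edim(\dot\gamma)\]
(and its CM-dimension analog). When $\edim(\dot\gamma)=0$ the map $\dot\gamma$ is Cohen and this is precisely Proposition~\ref{prop120312e}\eqref{prop120312e1}. In general, analyzing the right-hand square of the Theorem~\ref{intthm120201a} diagram via Fact~\ref{fact120418a}\eqref{fact120418a2} gives $\wti R'\otimes_{R'}\comp S\cong\wti R'/I\wti R'$, where $I=\ker(\vf')$, and the induced surjection $\wti R'/I\wti R'\onto\comp{\wti S}$ has kernel generated by a regular sequence of length $\edim(\dot\gamma)$ lifting a regular system of parameters of the closed fiber of $\dot\gamma$. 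Applying Proposition~\ref{prop120312e} to the Cohen map on quotients together with the shift of $\edim(\dot\gamma)$ contributed by the Koszul complex on this regular sequence yields the displayed equality. Assembling,
\[\cidim(\wti\vf)=\cidim_{\wti R'}(\comp{\wti S})-\edim(\wti\vf)=\cidim_{R'}(\comp S)+\edim(\alpha)-\edim(\vf)=\cidim(\vf)+\edim(\alpha),\]
finishing (b); part (c) is identical with~\eqref{prop120312e2} replacing~\eqref{prop120312e1}. Simultaneous finiteness and the reverse inequality follow from the symmetry of the Theorem~\ref{intthm120201a} construction. The main obstacle will be justifying the regular-sequence kernel structure of $\wti R'/I\wti R'\onto\comp{\wti S}$ when $\dot\gamma$ is only weakly Cohen---the naive pushout identification $\wti R'\otimes_{R'}\comp S=\comp{\wti S}$ fails here---and this is precisely where the separability hypothesis on $\wti R/\wti\m\to\wti S/\wti\n$ is essential, via the uniqueness part of Fact~\ref{fact120418a}\eqref{fact120418a3}.
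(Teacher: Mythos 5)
Your strategy for parts (b) and (c) --- produce a weakly Cohen map between the intermediate rings of minimal Cohen factorizations of $\grave\vf$ and $\grave{\wti\vf}$, identify $\comp{\wti S}$ with a flat base change of $\comp S$, and finish with Proposition~\ref{prop120312e} plus $\edim$ bookkeeping --- is the paper's strategy, and your identification of $T$ with $\wti R'$ via the comparison $\gamma'$ is correct. The gap is exactly where you flag it: you route the construction through Theorem~\ref{intthm120201a}, which does not assert that the square $R'\to\comp S$, $\wti R'\to\comp{\wti S}$ is a pushout, and your substitute claim --- that $\wti R'\otimes_{R'}\comp S\onto\comp{\wti S}$ has kernel generated by a regular sequence of length $\edim(\dot\gamma)$ --- is not justified by anything you cite. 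Moreover your assertion that the pushout identification ``fails'' is wrong in the case you actually need: when $\beta$ is Cohen one has $\edim(\dot\gamma)=\edim(\beta)=0$, so $R'\xra{\gamma'\dot\gamma}\wti R'\xra{\wti\vf'}\comp{\wti S}$ is a \emph{minimal} Cohen factorization of $\comp\beta\circ\vf'$ (whose closed fiber $\comp{\wti S}/\n\comp{\wti S}$ is a field), and Fact~\ref{fact120418a}\eqref{fact120418a2} gives $\comp{\wti S}\cong\wti R'\otimes_{R'}\comp S$. This is precisely what Proposition~\ref{thm120201a} was built to deliver, and it is the tool the paper uses in place of Theorem~\ref{intthm120201a}; invoking it makes your ``main obstacle'' and the Koszul-shift workaround unnecessary.

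Part (a) has a more serious gap. The Auslander--Bridger computation $\gdim(\vf)=\depth R-\depth S$ and $\gdim(\wti\vf)=\depth\wti R-\depth\wti S$ establishes the displayed formula only when \emph{both} G-dimensions are already known to be finite; the substance of the assertion is the simultaneous finiteness, and your appeal to ``composition formulas'' for $\wti\vf\alpha=\beta\vf$ together with ``descent'' along the faithfully flat $\alpha$ does not close it. The implication $\gdim(\wti\vf\alpha)<\infty\Rightarrow\gdim(\wti\vf)<\infty$ is a cancellation of a weakly regular map from the \emph{source} of a homomorphism of possibly infinite flat dimension; no cited result gives this, and proving it amounts to constructing exactly the comparison of Cohen factorizations that the paper builds. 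The paper handles (a) by the same pushout square as in (b): $\gdim_{S'}(\comp{\wti S})=\gdim_{S'}(S'\otimes_{R'}\comp S)=\gdim_{R'}(\comp S)$ by flat (co)base change \cite[(4.1.4)]{avramov:rhafgd}, which yields the formula and the simultaneous finiteness in one step. I recommend replacing your part (a) argument with this, or at minimum supplying a precise reference and proof for the two (de)composition statements you need.
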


\begin{proof}
\eqref{prop120312c2}
Assume that $\beta$ is Cohen.

Case 1: $S$ and $\wti S$ are complete.
Let
$R\xra{\dot{\varphi}} R'\xra{\varphi'} S$
be a minimal Cohen factorization of $\vf$,
and let
$\widetilde{R}\xra{\dot{\wti\vf}} S'\xra{\wti\vf'} \widetilde{S}$
be a minimal Cohen factorization
of $\widetilde{\varphi}$. 
Proposition~\ref{thm120201a} provides a weakly regular local ring
homomorphism $\alpha'\colon R'\to S'$ such that the
next diagram commutes
\begin{equation}\label{eq120312a}
\begin{split}
\xymatrix{
R\ar[r]^{\dot{\varphi}}\ar[d]_{\alpha}&R'\ar[r]^{\varphi'}\ar[d]^{\alpha'}&S\ar[d]^{\beta}\\
\widetilde{R}\ar[r]^{\dot{\wti\vf}}&S'\ar[r]^{\wti\vf'}&\widetilde{S}
}
\end{split}\end{equation}
and such that the second square is a pushout.
The residue field extension induced by $\alpha'$ is the same as the one induced by
$\beta$, since the maps $\vf'$ and $\wti\vf'$ are surjective.
Thus, the fact that $\alpha'$ is weakly regular and $\beta$ is Cohen
implies that $\alpha'$ is weakly Cohen.
Furthermore,  $\alpha'$ and $\beta$ have isomorphic closed fibers,
so $\alpha'$ is Cohen.

The pushout condition and the flatness of $\alpha'$ explain the first equality in the next display
$$
\cidim_{S'}(\wti S)=
\cidim_{S'}(\Lotimes[R']{S'}S)=
\cidim_{R'}(S)$$
and the second equality is from  Proposition~\ref{prop120312e}\eqref{prop120312e1}.
With Fact~\ref{fact120215a}, this explains the second equality in the next display:
\begin{align*}
\cidim(\widetilde{\varphi})
&=\cidim_{S'}(\wti S)-\edim(\dot{\wti\vf})\\
&=\cidim_{R'}(S)-[\edim(\dot\vf)+\edim(\alpha')-\edim(\alpha)]\\
&=[\cidim_{R'}(S)-\edim(\dot\vf)]-\edim(\beta)+\edim(\alpha)\\
&=\cidim(\varphi)+\edim(\alpha).
\end{align*}
The first and last equalities are by definition, since $\beta$ is Cohen; 
the third equality follows from
the fact that
$\alpha'$ and $\beta$ have isomorphic closed fibers.

Case 2: the general case.
There is a natural ring homomorphism
$\comp \beta\colon \comp S\to\comp{\wti S}$ making the following diagram commute
$$\xymatrix{
S\ar[r]\ar[d]_-{\beta}
&\comp S\ar[d]^-{\comp\beta}\\
\wti S\ar[r]&\comp{\wti S}}$$
where the horizontal maps are the natural ones.
Proposition~\ref{prop120312b}\eqref{prop120312b3}
implies that $\comp \beta$ is flat, and 
it is straightforward to show that its closed fiber is
isomorphic to the
completion of the closed fiber of $\beta$.
Thus, $\comp \beta$ is Cohen.
With~\cite[2.14.2]{sather:cidfc} this explains the first and third equalities in the 
next display:
\begin{align*}
\cidim(\wti\vf)
=\cidim(\grave{\wti\vf})
=\cidim(\grave\vf)+\edim(\alpha)
=\cidim(\vf)+\edim(\alpha).
\end{align*}
The second equality is from Case 1
applied to the diagram
$$
\xymatrix{
R\ar[r]^{\grave\varphi}\ar[d]_{\alpha}&\comp S\ar[d]^{\comp\beta}\\
\widetilde{R}\ar[r]^{\grave{\widetilde{\varphi}}}&\comp{\widetilde{S}}.
}
$$
This completes the proof of part~\eqref{prop120312c2}.

For parts~\eqref{prop120312c1} and~\eqref{prop120312c3},
argue as above, using~\cite[(4.1.4)]{avramov:rhafgd} 
and Proposition~\ref{prop120312e}\eqref{prop120312e2} in place 
of Proposition~\ref{prop120312e}\eqref{prop120312e1}.
\end{proof}

\begin{para}[Proof of Theorem~\ref{prop120322a}]
\label{proof120417c}
Case 1: P $=$ Gorenstein.
Since $\alpha$ and $\beta$ are both flat with Gorenstein closed fibers, they
are both Gorenstein. It follows that there are integers $a$ and $b$ such that
$I^{\wti R}_{\wti R}(t)=t^aI^R_R(t)$
and
$I^{\wti S}_{\wti S}(t)=t^bI^S_S(t)$.
Thus, there is an integer $c$ such that
$I^{S}_{S}(t)=t^cI^R_R(t)$
if and only if 
there is an integer $d$ such that
$I^{\wti S}_{\wti S}(t)=t^dI^{\wti R}_{\wti R}(t)$.
Proposition~\ref{prop120312b}\eqref{prop120312b2}
says that $\fd(\wti\vf)$ is finite if and only if $\fd(\vf)$ is finite,
so $\vf$ is Gorenstein if and only if $\wti\vf$ is Gorenstein, by definition.

Case 2: P $=$ quasi-Gorenstein.
This follows like Case 1, using 
Proposition~\ref{prop120312c}\eqref{prop120312c1}.

For the remaining cases, 
let
$R\xra{\dot{\varphi}} R'\xra{\varphi'} S$
be a minimal Cohen factorization of $\grave\vf$,
and let
$\widetilde{R}\xra{\dot{\wti\vf}} S'\xra{\wti\vf'} \widetilde{S}$
be a minimal Cohen factorization
of $\grave{\widetilde{\varphi}}$. 
Proposition~\ref{thm120201a} provides a weakly regular local ring
homomorphism $\alpha'\colon R'\to S'$ such that the
next diagram commutes
\begin{equation}\label{eq120322a}
\begin{split}
\xymatrix{
R\ar[r]^{\dot{\varphi}}\ar[d]_{\alpha}&R'\ar[r]^{\varphi'}\ar[d]^{\alpha'}&\comp S\ar[d]^{\beta}\\
\widetilde{R}\ar[r]^{\dot{\wti\vf}}&\wti R'\ar[r]^{\wti\vf'}&\comp{\widetilde{S}}
}
\end{split}\end{equation}
and such that the second square is a pushout.

Case 3: P $=$ complete intersection.
Since the second square of diagram~\eqref{eq120322a} is a pushout,
we have $\ker(\wti\vf')=\ker(\vf')\wti R'$.
The fact that $\alpha'$ is flat and local implies that 
a minimal generating sequence for $\ker(\vf')$ extends to a minimal
generating sequence for $\ker(\vf')\wti R'$, and that this sequence is 
$R'$-regular if and only if it is $\wti R'$-regular.
That is, the ideal $\ker(\wti\vf')=\ker(\vf')\wti R'$ in $\wti R'$ is a complete intersection
if and only if $\ker(\vf')\subseteq R'$ is a complete intersection.
Thus,
$\vf$ is complete intersection if and only if $\wti\vf$ is complete intersection, 
by definition.

Case 4: P $=$ (quasi-)Cohen-Macaulay.
Assume for the moment that $\gdim(\vf)<\infty$.
Then
Proposition~\ref{prop120312c}\eqref{prop120312c1}
implies that $\gdim(\wti\vf)<\infty$.
From~\cite[(6.7) Lemma]{avramov:rhafgd},
a relative dualizing complex for $\grave\vf$ is
$D^{\grave\vf}=\Rhom[R']{\comp S}{R'}$,
and it follows that
\begin{align*}
D^{\grave{\wti\vf}}
&=\Rhom[\wti R']{\comp{\wti S}}{\wti R'}\\
&\simeq\Rhom[\wti R']{\Lotimes[R']{\wti R'}{\comp S}}{\Lotimes[R']{\wti R'}{R'}}\\
&\simeq\Lotimes[R']{\wti R'}{\Rhom[R']{\comp S}{R'}}\\
&=\Lotimes[R']{\wti R'}{D^{\grave\vf}}.
\end{align*}
The second step is from the fact that
the second square of diagram~\eqref{eq120322a} is a pushout
and $\alpha'$ is flat.
It follows that
\begin{align*}
\qcmd(\wti\vf)
&=\qcmd(\grave{\wti\vf})
=\amp(D^{\grave{\wti\vf}})
=\amp(\Lotimes[R']{\wti R'}{D^{\grave\vf}})\\
&=\amp(D^{\grave\vf})
=\qcmd(\grave\vf)
=\qcmd(\vf).
\end{align*}
A similar argument shows that
if $\fd(\vf)<\infty$, then $\cmd(\wti\vf)=\cmd(\vf)$.

In view of the simultaneous finiteness given in
Propositions~\ref{prop120312b}\eqref{prop120312b1}--\eqref{prop120312b2}
and~\ref{prop120312c}\eqref{prop120312c1}, 
we see that
$\vf$ is quasi-Cohen-Macaulay if and only if $\wti\vf$ is quasi-Cohen-Macaulay,
and $\vf$ is Cohen-Macaulay if and only if $\wti\vf$ is Cohen-Macaulay.
\qed
\end{para}

\begin{disc}
We have learned from Javier Majadas and Tirdad Sharif that the assumptions of weak Cohenness and separability can be removed from Theorem~\ref{prop120322a} 
in the case where P is ``complete intersection'', using Andr\'e-Quillen homology. In the other cases, though, we do not know if these assumptions are necessary.
\end{disc}

\section{Proof of Theorem~\ref{intthm120201d}}
\label{sec120417a}

\begin{disc}\label{disc120426a}
Complete intersection dimension is quite nice in many respects. 
However, at this time we do not know how it behaves with respect to short exact
sequences: If two of the modules in an exact sequence $0\to M_1\to M_2\to M_3\to 0$
have finite CI-dimension, must the third module also have finite CI-dimension?

The difficulty with this question is the following. Assume, for instance, that
$\cidim_R(M_1)$ and $\cidim_R(M_2)$ are finite. Then for $i=1,2$ there is a 
quasi-deformation
$R\to R_i\from Q_i$  such that
$\pd_{Q_i}(\Otimes{R_i}{M_i})<\infty$.
If there were a single quasi-deformation $R\to R'\from Q$ such that
$\pd_{Q}(\Otimes{R'}{M_i})<\infty$ for $i=1,2$ then we could conclude
easily that $\pd_{Q}(\Otimes{R'}{M_3})<\infty$, so $\cidim_R(M_3)<\infty$.
The difficulty lies in attempting  to combine the two given quasi-deformations
into a single one that works for both $M_1$ and $M_2$. 
Theorem~\ref{intthm120201d} deals with half of this problem  when the maps $R\to R_i$ are weakly Cohen
by showing how to combine the flat maps in the given quasi-deformations.
However, we do not see how to use Theorem~\ref{intthm120201d} to answer the above question in any special cases,
e.g., when $R$ contains a field of characteristic 0.
\end{disc}

\begin{para}[Proof of Theorem~\ref{intthm120201d}]
\label{proof120417b}
\eqref{intthm120201d01}
Consider the induced maps $k\to k_i$ for $i=1, 2$. 
Since these are separable by assumption, there is a commutative diagram
of separable field extensions
$$\xymatrix{
k\ar[r]\ar[d]
&k_1\ar[d] \\
k_2\ar[r]&k'.
}$$
Let $\alpha\colon (R,\m,k)\to (\wti R,\wti \m,k')$ and 
$\beta_i\colon (R_i,\m_i,k_i)\to(\wti R_i,\wti \m_i,k')$ be 
Cohen extensions corresponding to the separable field extensions
$k\to k'$ and $k_i\to k'$ such that the rings $\wti R$ and $\wti R_i$ are
complete; see~\cite[Th\'{e}or\`{e}m 19.8.2(ii)]{grothendieck:ega4-1}.

By Lemma~\ref{intthm120201b} and Proposition~\ref{prop120312b}\eqref{prop120312b3}, each $\varphi_i$ ($i=1, 2$) can be extended to a
flat local homomorphism $\widetilde{\varphi}_i$ making the following diagram commute: 
$$
\xymatrix{
(R,\m,k)\ar[r]^-{\vf_i}\ar[d]_-{\alpha}
&(R_i,\m_i,k_i)\ar[d]^-{\beta_i} \\
(\wti R,\wti\m,k')\ar@{-->}[r]^-{\wti\vf_i}&(\wti R_i,\wti\m_i,k').}
$$
For $i=1,2$, Fact~\ref{fact120418a}\eqref{fact120418a2}
provides
a commutative diagram of local ring homomorphisms
\begin{equation}\label{eq120409a}
\begin{split}
\xymatrix{
(Q_i,\n_i,k_i)\ar@{->>}[d]_-{\tau_i}\ar@{-->}[r]^{\delta_i}
&(\widetilde{Q}_i,\wti\n_i,k')\ar@{-->>}[d]^{\gamma_i}\\
(R_i,\m_i,k_i)\ar[r]^-{\beta_i}&(\widetilde{R}_i,\wti\m_i,k')
}
\end{split}\end{equation}
such that $\widetilde{Q}_i$ is complete, $\delta_i$ is weakly regular, $\gamma_i$ is surjective, and the induced map $R_i\otimes_{Q_i}\widetilde{Q}_i\to \widetilde{R}_i$ is an isomorphism. 
Because of this isomorphism and the flatness of $\delta_i$,
the fact that $\tau_i$ is surjective with kernel generated by a $Q_i$-regular
sequence implies that $\gamma_i$ is surjective with kernel generated by a 
$\wti Q_i$-regular sequence.
Also, since the maps $R\xra{\vf_i}R_i\xra{\beta_i}\wti R_i$ are flat and local,
the same is true of the composition.
Thus, for  $i=1, 2$, the diagram 
$R\xra{\beta_i\vf_i} \widetilde{R}_i\xla{\gamma_i} \widetilde{Q}_i$ is a quasi-deformation.

Now consider the complete tensor product 
$R'=\widetilde{R}_1\widehat{\otimes}_{\widetilde{R}}\widetilde{R}_2$;
see, e.g., \cite[Section 0.7.7]{grothendieck:ega1} and~\cite[Section V.B.2]{serre:alm}
for background on this. This  is a complete semi-local noetherian ring that is flat over 
each $\widetilde{R}_i$ by~\cite[Lemme 19.7.1.2]{grothendieck:ega4-1}. 
Moreover, the proof of~\cite[Lemme 19.7.1.2]{grothendieck:ega4-1} shows that the maximal ideals of $R'$
are in bijection with the maximal ideals of 
$(\wti R_1/\wti\m_1)\otimes_{\wti R}(\wti R_2/\wti\m_2)\cong k'\otimes_{k'}k'
\cong k'$, so $R'$ is local.
For $i=1,2$ let
$\sigma_i\colon(\wti R_i,\wti\m_i,k')\to (R',\m',k')$
be the natural (flat local) map.

Hence, each $\sigma_i$ is flat with complete target,
so  by
Fact~\ref{fact120418a}\eqref{fact120418a}
there exists a commutative diagram of local homomorphisms
\begin{equation}\label{eq120409b}
\begin{split}
\xymatrix{
(\widetilde{Q}_i,\wti\n_i,k')\ar@{->>}[d]_-{\gamma_i}\ar@{-->}[r]^{\overline{\delta}_i}&
(\overline{Q}_i,\ol\n_i,k')\ar@{-->>}[d]^{\overline{\gamma}_i}\\
(\widetilde{R}_i,\wti\m_i,k')\ar[r]^-{\sigma_i}&(R',\m',k')
}
\end{split}\end{equation}
such that $\overline{Q}_i$ is complete, $\overline{\delta}_i$ is weakly regular, 
and $\overline{\gamma}_i$ is surjective. 

Thus, the following diagram
$$
\xymatrix@C=7mm{
(Q_1,\n_1,k_1)\ar[r]^-{\tau_1}\ar[dr]_-{\ol\delta_1\delta_1}&(R_1,\m_1,k_1)\ar[rd]_-{\sigma_1\beta_1}&(R,\m,k)\ar[r]^-{\vf_2}\ar[l]_-{\vf_1}\ar[d]^-{\alpha'}
&(R_2,\m_2,k_2)\ar[ld]^-{\sigma_2\beta_2}&(Q_2,\n_2,k_2)\ar[l]_-{\tau_2}\ar[dl]^-{\ol\delta_2\delta_2}\\
&(\overline{Q}_1,\ol{\n}_1,k')\ar[r]_-{\ol\gamma_1}&(R',\m',k')&(\overline{Q}_2,\ol{\n}_1,k')\ar[l]^-{\ol\gamma_2}& \hspace{11mm}\text{(C.1)}
}
$$
commutes where $\alpha'=\sigma_1\beta_1\vf_1=\sigma_2\beta_2\vf_2$.
By assumption, each $\ol\gamma_i$ is surjective.
Since $\sigma_1$, $\beta_1$, and $\vf_1$ are flat, so is their composition $\alpha'$.
The maps $\ol\delta_i$ and $\delta_i$ are weakly regular, hence Fact~\ref{fact120215a}
implies that their composition is weakly regular. Moreover, since the field
extension $k_i\to k'$ is separable, the composition $\ol\delta_i\delta_i$
is weakly Cohen.

\eqref{intthm120201d02}
Assume that each map $\vf_i$ is weakly Cohen. Then the closed fiber $R_i/\m R_i$ is regular. Since $\beta_i\colon R_i\to\wti R_i$ is Cohen, the same is true of the induced
map $R_i/\m R_i\to \wti R_i/\m \wti R_i$.
Thus, the fact that $R_i/\m R_i$ is regular implies that $\wti R_i/\m \wti R_i$ is also regular.
Since $\alpha\colon R\to \wti R$ is Cohen, we have $\m\wti R=\wti \m$,
and it follows that $\wti R_i/\m \wti R_i=\wti R_i/\wti\m \wti R_i$ is regular.
From~\cite[Lemme 19.7.1.2]{grothendieck:ega4-1} we know  that the closed fiber of the  map
$\sigma_1\colon(\wti R_1,\wti\m_1,k')\to (R',\m',k')$
is 
$$R'/\wti\m_1R'
\cong [\wti R_1/\wti \m_1]\otimes_{\wti R}\wti R_2
\cong[\wti R_1/\wti \m_1]\otimes_{k'}[\wti R_2/\wti\m \wti R_2]
\cong k'\otimes_{k'}[\wti R_2/\wti\m \wti R_2]\cong \wti R_2/\wti\m \wti R_2.
$$
Since this ring is regular
and the field extension $k'\to k'$ is trivially separable, the map $\sigma_1$ is 
weakly Cohen, as is $\sigma_2$ by similar argument.
Thus, Fact~\ref{fact120418a}\eqref{fact120418a2}
implies that diagram~\eqref{eq120409b}
is a pushout. As in an  earlier part of this proof, for each $i=1, 2$  the diagram 
$R\xra{\sigma_i\beta_i\vf_i} R'\xla{\ol\gamma_i} \overline{Q}_i$ is a quasi-deformation.

By construction, the maps $R_i\xra{\sigma_i\beta_i}R'$ are compositions
of weakly Cohen maps, so they are weakly Cohen; hence 
conclusion~\eqref{intthm120201d1} from the statement of 
Theorem~\ref{intthm120201d} is satisfied.
Since the diagrams~\eqref{eq120409a} and~\eqref{eq120409b} are pushouts,
the same is true of the parallelograms in the diagram above; hence 
conclusion~\eqref{intthm120201d2} from the statement of 
Theorem~\ref{intthm120201d} is satisfied.
Lastly, given an $R$-module $M$,
the flatness of the map $Q_i\xra{\ol\delta_i\delta_i}\ol Q_i$ provides
the first equality in the next display:
\begin{align*}
\pd_{Q_i}(M\otimes_RR_i)
&=\pd_{\ol Q_i}((M\otimes_RR_i)\otimes_{Q_i}\ol Q_i)\\
&=\pd_{\ol Q_i}(M\otimes_R(R_i\otimes_{Q_i}\ol Q_i))\\
&=\pd_{\ol Q_i}(M\otimes_RR').
\end{align*}
(See~\cite[Theorem 9.6]{perry:ffdpm} and~\cite{raynaud:cpptpm}.)
The last equality is from the pushout conclusion on each parallelogram.
This shows that
conclusion~\eqref{intthm120201d4} from the statement of 
Theorem~\ref{intthm120201d} is satisfied.
\qed
\end{para}

\begin{disc}
At this time, we do not know if the  
weakly Cohen assumption is  necessary in Theorem~\ref{intthm120201d}.
\end{disc}

\section*{Acknowledgments}
We are grateful to Javier Majadas, Tirdad Sharif, and the referee for their thoughtful and useful comments.


\begin{thebibliography}{10}

\bibitem{auslander:adgeteac}
M.~Auslander, \emph{Anneaux de {G}orenstein, et torsion en alg\`ebre
  commutative}, S\'eminaire d'Alg\`ebre Commutative dirig\'e par Pierre Samuel,
  vol. 1966/67, Secr\'etariat math\'ematique, Paris, 1967. \MR{37 \#1435}

\bibitem{auslander:smt}
M.~Auslander and M.\ Bridger, \emph{Stable module theory}, Memoirs of the
  American Mathematical Society, No. 94, American Mathematical Society,
  Providence, R.I., 1969. \MR{42 \#4580}

\bibitem{avramov:lcih}
L.~L. Avramov, \emph{Locally complete intersection homomorphisms and a
  conjecture of {Q}uillen on the vanishing of cotangent homology}, Ann. of
  Math. (2) \textbf{150} (1999), no.~2, 455--487. \MR{1726700 (2001a:13024)}

\bibitem{avramov:glh}
L.~L. Avramov and H.-B.\ Foxby, \emph{Gorenstein local homomorphisms}, Bull.
  Amer. Math. Soc. (N.S.) \textbf{23} (1990), no.~1, 145--150. \MR{1020605
  (90k:13009)}

\bibitem{avramov:hdouc}
\bysame, \emph{Homological dimensions of unbounded complexes}, J. Pure Appl.
  Algebra \textbf{71} (1991), 129--155. \MR{93g:18017}

\bibitem{avramov:lgh}
\bysame, \emph{Locally {G}orenstein homomorphisms}, Amer. J. Math. \textbf{114}
  (1992), no.~5, 1007--1047. \MR{1183530 (93i:13019)}

\bibitem{avramov:rhafgd}
\bysame, \emph{Ring homomorphisms and finite {G}orenstein dimension}, Proc.
  London Math. Soc. (3) \textbf{75} (1997), no.~2, 241--270. \MR{98d:13014}

\bibitem{avramov:cmporh}
\bysame, \emph{Cohen-{M}acaulay properties of ring homomorphisms}, Adv. Math.
  \textbf{133} (1998), no.~1, 54--95. \MR{1492786 (99c:13043)}

\bibitem{avramov:solh}
L.~L. Avramov, H.-B.\ Foxby, and B.\ Herzog, \emph{Structure of local
  homomorphisms}, J. Algebra \textbf{164} (1994), 124--145. \MR{95f:13029}

\bibitem{avramov:cid}
L.~L. Avramov, V.~N. Gasharov, and I.~V. Peeva, \emph{Complete intersection
  dimension}, Inst. Hautes \'Etudes Sci. Publ. Math. (1997), no.~86, 67--114
  (1998). \MR{1608565 (99c:13033)}

\bibitem{christensen:gd}
L.~W. Christensen, \emph{Gorenstein dimensions}, Lecture Notes in Mathematics,
  vol. 1747, Springer-Verlag, Berlin, 2000. \MR{2002e:13032}

\bibitem{foxby:hacr}
H.-B.\ Foxby, \emph{Hyperhomological algebra \& commutative rings}, lecture
  notes.

\bibitem{frankild:qcmpolh}
A.~Frankild, \emph{Quasi {C}ohen-{M}acaulay properties of local homomorphisms},
  J. Algebra \textbf{235} (2001), 214--242. \MR{2001j:13023}

\bibitem{gelfand:moha}
S.~I. Gelfand and Y.~I. Manin, \emph{Methods of homological algebra},
  Springer-Verlag, Berlin, 1996. \MR{2003m:18001}

\bibitem{gerko:ohd}
A.~A. Gerko, \emph{On homological dimensions}, Mat. Sb. \textbf{192} (2001),
  no.~8, 79--94, translation in Sb.\ Math. \textbf{192} (2001), no.\ 7--8,
  1165--1179. \MR{2002h:13024}

\bibitem{grothendieck:ega1}
A.~Grothendieck, \emph{\'{E}l\'ements de g\'eom\'etrie alg\'ebrique. {I}. {L}e
  langage des sch\'emas}, Inst. Hautes \'Etudes Sci. Publ. Math. (1960), no.~4,
  228. \MR{0217083 (36 \#177a)}

\bibitem{grothendieck:ega3-1}
A.\ Grothendieck, \emph{\'{E}l\'ements de g\'eom\'etrie alg\'ebrique. {III}.
  \'{E}tude cohomologique des faisceaux coh\'erents. {I}}, Inst. Hautes
  \'Etudes Sci. Publ. Math. (1961), no.~11, 167. \MR{0217085 (36 \#177c)}

\bibitem{grothendieck:ega4-1}
\bysame, \emph{\'{E}l\'ements de g\'eom\'etrie alg\'ebrique. {IV}. \'{E}tude
  locale des sch\'emas et des morphismes de sch\'emas. {I}}, Inst. Hautes
  \'Etudes Sci. Publ. Math. (1964), no.~20, 259.

\bibitem{grothendieck:ega4-4}
\bysame, \emph{\'{E}l\'ements de g\'eom\'etrie alg\'ebrique. {IV}. \'{E}tude
  locale des sch\'emas et des morphismes de sch\'emas {IV}}, Inst. Hautes
  \'Etudes Sci. Publ. Math. (1967), no.~32, 361.

\bibitem{hartshorne:rad}
R.\ Hartshorne, \emph{Residues and duality}, Lecture Notes in Mathematics, No.
  20, Springer-Verlag, Berlin, 1966. \MR{36 \#5145}

\bibitem{herzog:mlr}
B.\ Herzog, \emph{On the macaulayfication of local rings}, J. Algebra
  \textbf{67} (1980), no.~2, 305--317. \MR{0602065 (82c:13029)}

\bibitem{iyengar:golh}
S.\ Iyengar and S.\ Sather-Wagstaff, \emph{G-dimension over local
  homomorphisms. {A}pplications to the {F}robenius endomorphism}, Illinois J.
  Math. \textbf{48} (2004), no.~1, 241--272. \MR{2048224 (2005c:13016)}

\bibitem{kawasaki:mns}
T.~Kawasaki, \emph{On {M}acaulayfication of {N}oetherian schemes}, Trans. Amer.
  Math. Soc. \textbf{352} (2000), no.~6, 2517--2552. \MR{1707481 (2000j:14077)}

\bibitem{matsumura:crt}
H.\ Matsumura, \emph{Commutative ring theory}, second ed., Studies in Advanced
  Mathematics, vol.~8, University Press, Cambridge, 1989. \MR{90i:13001}

\bibitem{perry:ffdpm}
A.~Perry, \emph{Faithfully flat descent for projectivity of modules}, preprint
  (2012) \texttt{arXiv:1011.0038v1}.

\bibitem{raynaud:cpptpm}
M.\ Raynaud and L.\ Gruson, \emph{Crit\`eres de platitude et de projectivit\'e.
  {T}echniques de ``platification'' d'un module}, Invent. Math. \textbf{13}
  (1971), 1--89. \MR{0308104 (46 \#7219)}

\bibitem{sather:cidc}
S.\ Sather-Wagstaff, \emph{Complete intersection dimensions for complexes}, J.
  Pure Appl. Algebra \textbf{190} (2004), no.~1-3, 267--290. \MR{2043332
  (2005i:13022)}

\bibitem{sather:cidfc}
\bysame, \emph{Complete intersection dimensions and {F}oxby classes}, J. Pure
  Appl. Algebra \textbf{212} (2008), no.~12, 2594--2611. \MR{2452313
  (2009h:13015)}

\bibitem{serre:alm}
J.-P. Serre, \emph{Alg\`ebre locale. {M}ultiplicit\'es}, Cours au Coll\`ege de
  France, 1957--1958, r\'edig\'e par Pierre Gabriel. Seconde \'edition, 1965.
  Lecture Notes in Mathematics, vol.~11, Springer-Verlag, Berlin, 1965.
  \MR{0201468 (34 \#1352)}

\bibitem{verdier:cd}
J.-L.\ Verdier, \emph{Cat\'{e}gories d\'{e}riv\'{e}es}, SGA 4$\frac{1}{2}$,
  Springer-Verlag, Berlin, 1977, Lecture Notes in Mathematics, Vol. 569,
  pp.~262--311. \MR{57 \#3132}

\bibitem{verdier:1}
\bysame, \emph{Des cat\'egories d\'eriv\'ees des cat\'egories ab\'eliennes},
  Ast\'erisque (1996), no.~239, xii+253 pp. (1997), With a preface by Luc
  Illusie, Edited and with a note by Georges Maltsiniotis. \MR{98c:18007}

\bibitem{yassemi:gd}
S.\ Yassemi, \emph{G-dimension}, Math. Scand. \textbf{77} (1995), no.~2,
  161--174. \MR{97d:13017}

\end{thebibliography}
\providecommand{\bysame}{\leavevmode\hbox to3em{\hrulefill}\thinspace}
\providecommand{\MR}{\relax\ifhmode\unskip\space\fi MR }
\providecommand{\MRhref}[2]{%
  \href{http://www.ams.org/mathscinet-getitem?mr=#1}{#2}
}
\providecommand{\href}[2]{#2}

\end{document}